\newcounter{Theorem}[section]
\theoremstyle{definition}
\newtheorem{definition}[Theorem]{Definition}
\theoremstyle{plain}
\newtheorem{theorem}[Theorem]{Theorem}
\newtheorem{lemma}[Theorem]{Lemma}
\numberwithin{Theorem}{section}
\newtheorem{problem}[Theorem]{Problem}
\newenvironment{reptheorem}[1]
  {\rthm}
  {\endrthm}
\newcommand{\D}{\mathcal{D}}
\newcommand{\Dm}{\mathcal{D}^{m}}
\newcommand{\T}{\mathcal{T}}
\newcommand{\size}[1]{\left\langle #1\right\rangle}
\newcommand{\old}[1]{{}}
\tikzset
{
  tree5.1/.pic=
  {%
    \draw (0,0) -- (0.5,0.5) -- (1,0);
    \draw (0.5,0.5) -- (1,1) -- (2,0);
    \draw (1,1) -- (1.5,1.5) -- (3,0);
    \draw (1.5,1.5) -- (2,2) -- (4,0);
    \node[fill=black,rectangle,inner sep=2pt]  at (0,0) {};      
    \node[fill=black,rectangle,inner sep=2pt]  at (1,0) {};      
    \node[fill=black,rectangle,inner sep=2pt]  at (2,0) {}; 
    \node[fill=black,rectangle,inner sep=2pt]  at (3,0) {}; 
    \node[fill=black,rectangle,inner sep=2pt]  at (4,0) {}; 
    \node[fill=black,circle,inner sep=1pt]  at (0.5,0.5) {};
    \node[fill=black,circle,inner sep=1pt]  at (1,1) {};
    \node[fill=black,circle,inner sep=1pt]  at (1.5,1.5) {};
    \node[fill=black,circle,inner sep=1pt]  at (2,2) {};
  }
}
\tikzset
{
  tree5.2/.pic=
  {%
    \draw (0,0) -- (0.5,0.5) -- (1,0);
    \draw (0.5,0.5) -- (1.5,1.5) -- (2.5,0.5);
    \draw (2,0) -- (2.5,0.5) -- (3,0);
    \draw (1.5,1.5) -- (2,2) -- (4,0);
    \node[fill=black,rectangle,inner sep=2pt]  at (0,0) {};      
    \node[fill=black,rectangle,inner sep=2pt]  at (1,0) {};      
    \node[fill=black,rectangle,inner sep=2pt]  at (2,0) {}; 
    \node[fill=black,rectangle,inner sep=2pt]  at (3,0) {}; 
    \node[fill=black,rectangle,inner sep=2pt]  at (4,0) {}; 
    \node[fill=black,circle,inner sep=1pt]  at (0.5,0.5) {};
    \node[fill=black,circle,inner sep=1pt]  at (2,2) {};
    \node[fill=black,circle,inner sep=1pt]  at (1.5,1.5) {};
    \node[fill=black,circle,inner sep=1pt]  at (2.5,0.5) {};
  }
}
\tikzset
{
  tree5.3/.pic=
  {%
    \draw (0,0) -- (0.5,0.5) -- (1,0);
    \draw (0.5,0.5) -- (1,1) -- (2,0);
    \draw (3,0) -- (3.5,0.5) -- (4,0);
    \draw (1,1) -- (2,2) -- (3.5,0.5);
    \node[fill=black,rectangle,inner sep=2pt]  at (0,0) {};      
    \node[fill=black,rectangle,inner sep=2pt]  at (1,0) {};      
    \node[fill=black,rectangle,inner sep=2pt]  at (2,0) {}; 
    \node[fill=black,rectangle,inner sep=2pt]  at (3,0) {}; 
    \node[fill=black,rectangle,inner sep=2pt]  at (4,0) {}; 
    \node[fill=black,circle,inner sep=1pt]  at (0.5,0.5) {};
    \node[fill=black,circle,inner sep=1pt]  at (1,1) {};
    \node[fill=black,circle,inner sep=1pt]  at (2,2) {};
    \node[fill=black,circle,inner sep=1pt]  at (3.5,0.5) {};
  }
}
\tikzset
{
both_caterpillars/.pic=
{%

\foreach \x in {1,2,4,6,7,8}{ \node[fill=black,circle,inner sep=1pt ] (n\x) at (-\x/2-3,\x-8) {};  }
\draw (-4-3,8-8) -- (-3,-8); 

\node[fill=black, rectangle, inner sep=1.5pt, label = right:{$v_{1}$}] at (-3,8) {};
\draw (-4-3,8-8) -- (-3,2*8-8);
\node[fill=black, rectangle, inner sep=1.5pt, label = right:{$v_{2}$}] at (-3,6) {};
\draw (-3.5-3,7-8) -- (-3,2*7-8);
\node[fill=black, rectangle, inner sep=1.5pt, label = right:{$v_{3}$}] at (-3,4) {};
\draw (-3-3,6-8) -- (-3,2*6-8);
\node[label = center:{$\vdots$}] at (-3,2) {};
\node[fill=black, rectangle, inner sep=1.5pt, label =  {[xshift= 4mm, yshift=-5mm] $v_{m}$ }] at (-3,0) {};
\draw (-2-3,4-8) -- (-3,2*4-8);
\node[label = center:{$\vdots$}] at (-3,-2) {};
\node[fill=black, rectangle, inner sep=1.5pt, label = right:{$v_{n-2}$}] at (-3,-4) {};
\draw (-1-3,2-8) -- (-3,2*2-8);
\node[fill=black, rectangle, inner sep=1.5pt, label = right:{$v_{n-1}$}] at (-3,-6) {};
\draw (-0.5-3,1-8) -- (-3,2*1-8);
\node[fill=black, rectangle, inner sep=1.5pt, label = right:{$v_{n}$}] at (-3,-8) {};

\foreach \x in {1,2,4,6,7,8}{ \node[fill=black,circle,inner sep=1pt, label = left:{} ] (n\x) at ( \x/2 + 3, \x - 8 ) {}; }
\draw (4+3,8-8) -- (3,-8); 

\node[fill=black, rectangle, inner sep=1.5pt, label = left:{$u_{1}$}] at (3,8) {};
\draw (4+3,8-8) -- (3,2*8-8);
\node[fill=black, rectangle, inner sep=1.5pt, label = left:{$u_{2}$}] at (3,6) {};
\draw (3.5+3,7-8) -- (3,2*7-8);
\node[fill=black, rectangle, inner sep=1.5pt, label = left:{$u_{3}$}] at (3,4) {};
\draw (3+3,6-8) -- (3,2*6-8);
\node[label = center:{$\vdots$}] at (3,2) {};
\node[fill=black, rectangle, inner sep=1.5pt, label = {[xshift=-1.5mm, yshift=-5mm] $u_{n-m+1}$ } ] at (3,0) {};
\draw (2+3,4-8) -- (3,2*4-8);
\node[label = center:{$\vdots$}] at (3,-2) {};
\node[fill=black, rectangle, inner sep=1.5pt, label = left:{$u_{n-2}$}] at (3,-4) {};
\draw (1+3,2-8) -- (3,2*2-8);
\node[fill=black, rectangle, inner sep=1.5pt, label = left:{$u_{n-1}$}] at (3,-6) {};
\draw (0.5 + 3,1-8) -- (3,2*1-8);
\node[fill=black, rectangle, inner sep=1.5pt, label = left:{$u_{n}$}] at (3,-8) {};

\draw[dashed] (-3,8) -- (3,-8);
\draw[dashed] (-3,6) -- (3,-6);
\draw[dashed] (-3,4) -- (3,-4);
\draw[dashed] (-3,0) -- (3,0);

}
}
\tikzset
{
both_caterpillars_card_1/.pic=
{%

\foreach \x in {1,3,5,6}{ \node[fill=black,circle,inner sep=1pt ] (n\x) at (-\x/2-3,\x-6) {};  }
\draw (-6,0) -- (-3,-6); 

\node[fill=black, rectangle, inner sep=1.5pt, label = right:{$v'_{1}$}] at (-3,6) {};
\draw (-3-3,6-6) -- (-3,2*7-8);
\node[fill=black, rectangle, inner sep=1.5pt, label = right:{$v'_{2}$}] at (-3,4) {};
\draw (-2.5-3,5-6) -- (-3,2*6-8);
\node[label = center:{$\vdots$}] at (-3,2) {};
\node[fill=black, rectangle, inner sep=1.5pt, label =  {[xshift= 5mm, yshift=-6mm] $v'_{m-1}$ }] at (-3,0) {};
\draw (-1.5-3,3-6) -- (-3,2*4-8);
\node[label = center:{$\vdots$}] at (-3,-2) {};
\node[fill=black, rectangle, inner sep=1.5pt, label = right:{$v'_{n-2}$}] at (-3,-4) {};
\draw (-0.5-3,1-6) -- (-3,2*2-8);
\node[fill=black, rectangle, inner sep=1.5pt, label = right:{$v'_{n-1}$}] at (-3,-6) {};

\foreach \x in {1,3,5,6}{ \node[fill=black,circle,inner sep=1pt ] (n\x) at (\x/2+3,\x-6) {};  }
\draw (3+3,0) -- (3,-6); 

\node[fill=black, rectangle, inner sep=1.5pt, label = left:{$u'_{1}$}] at (3,6) {};
\draw (3+3,6-6) -- (3,2*7-8);
\node[fill=black, rectangle, inner sep=1.5pt, label = left:{$u'_{2}$}] at (3,4) {};
\draw (2.5+3,5-6) -- (3,2*6-8);
\node[label = center:{$\vdots$}] at (3,2) {};
\node[fill=black, rectangle, inner sep=1.5pt, label = {[xshift= -2mm, yshift=-6mm] $u'_{n-m+1}$ } ] at (3,0) {};
\draw (1.5+3,3-6) -- (3,2*4-8);
\node[label = center:{$\vdots$}] at (3,-2) {};
\node[fill=black, rectangle, inner sep=1.5pt, label = left:{$u'_{n-2}$}] at (3,-4) {};
\draw (0.5+3,1-6) -- (3,2*2-8);
\node[fill=black, rectangle, inner sep=1.5pt, label = left:{$u'_{n-1}$}] at (3,-6) {};

\draw[dashed] (-3,6) -- (3,-6);
\draw[dashed] (-3,4) -- (3,-4);
\draw[dashed] (-3,0) -- (3,0);

}
}
\tikzset
{
both_caterpillars_old/.pic=
{%

\foreach \x in {1,2,4,6,7,8}{ \node[fill=black,circle,inner sep=1pt ] (n\x) at (-\x-3,\x-8) {};  }
\draw (-11,0) -- (-3,-8); 

\node[fill=black, rectangle, inner sep=1.5pt, label = right:{$v_{1}$}] at (-3,8) {};
\draw (-8-3,8-8) -- (-3,2*8-8);
\node[fill=black, rectangle, inner sep=1.5pt, label = right:{$v_{2}$}] at (-3,6) {};
\draw (-7-3,7-8) -- (-3,2*7-8);
\node[fill=black, rectangle, inner sep=1.5pt, label = right:{$v_{3}$}] at (-3,4) {};
\draw (-6-3,6-8) -- (-3,2*6-8);
\node[label = center:{$\vdots$}] at (-3,2) {};
\node[fill=black, rectangle, inner sep=1.5pt, label =  {[xshift= 4mm, yshift=-5mm] $v_{m}$ }] at (-3,0) {};
\draw (-4-3,4-8) -- (-3,2*4-8);
\node[label = center:{$\vdots$}] at (-3,-2) {};
\node[fill=black, rectangle, inner sep=1.5pt, label = right:{$v_{n-2}$}] at (-3,-4) {};
\draw (-2-3,2-8) -- (-3,2*2-8);
\node[fill=black, rectangle, inner sep=1.5pt, label = right:{$v_{n-1}$}] at (-3,-6) {};
\draw (-1-3,1-8) -- (-3,2*1-8);
\node[fill=black, rectangle, inner sep=1.5pt, label = right:{$v_{n}$}] at (-3,-8) {};

\foreach \x in {1,2,4,6,7,8}{ \node[fill=black,circle,inner sep=1pt, label = left:{} ] (n\x) at (\x+3,\x-8) {}; }
\draw (11,0) -- (3,-8); 

\node[fill=black, rectangle, inner sep=1.5pt, label = left:{$u_{1}$}] at (3,8) {};
\draw (8+3,8-8) -- (3,2*8-8);
\node[fill=black, rectangle, inner sep=1.5pt, label = left:{$u_{2}$}] at (3,6) {};
\draw (7+3,7-8) -- (3,2*7-8);
\node[fill=black, rectangle, inner sep=1.5pt, label = left:{$u_{3}$}] at (3,4) {};
\draw (6+3,6-8) -- (3,2*6-8);
\node[label = center:{$\vdots$}] at (3,2) {};
\node[fill=black, rectangle, inner sep=1.5pt, label = {[xshift=-1.5mm, yshift=-5mm] $u_{n-m+1}$ } ] at (3,0) {};
\draw (4+3,4-8) -- (3,2*4-8);
\node[label = center:{$\vdots$}] at (3,-2) {};
\node[fill=black, rectangle, inner sep=1.5pt, label = left:{$u_{n-2}$}] at (3,-4) {};
\draw (2+3,2-8) -- (3,2*2-8);
\node[fill=black, rectangle, inner sep=1.5pt, label = left:{$u_{n-1}$}] at (3,-6) {};
\draw (1+3,1-8) -- (3,2*1-8);
\node[fill=black, rectangle, inner sep=1.5pt, label = left:{$u_{n}$}] at (3,-8) {};

\draw[dashed] (-3,8) -- (3,-8);
\draw[dashed] (-3,6) -- (3,-6);
\draw[dashed] (-3,4) -- (3,-4);
\draw[dashed] (-3,0) -- (3,0);

}
}
\tikzset
{
both_caterpillars_card_1_old/.pic=
{%

\foreach \x in {1,3,5,6}{ \node[fill=black,circle,inner sep=1pt ] (n\x) at (-\x-3,\x-6) {};  }
\draw (-9,0) -- (-3,-6); 

\node[fill=black, rectangle, inner sep=1.5pt, label = right:{$v'_{1}$}] at (-3,6) {};
\draw (-6-3,6-6) -- (-3,2*7-8);
\node[fill=black, rectangle, inner sep=1.5pt, label = right:{$v'_{2}$}] at (-3,4) {};
\draw (-5-3,5-6) -- (-3,2*6-8);
\node[label = center:{$\vdots$}] at (-3,2) {};
\node[fill=black, rectangle, inner sep=1.5pt, label =  {[xshift= 5mm, yshift=-6mm] $v'_{m-1}$ }] at (-3,0) {};
\draw (-3-3,3-6) -- (-3,2*4-8);
\node[label = center:{$\vdots$}] at (-3,-2) {};
\node[fill=black, rectangle, inner sep=1.5pt, label = right:{$v'_{n-2}$}] at (-3,-4) {};
\draw (-1-3,1-6) -- (-3,2*2-8);
\node[fill=black, rectangle, inner sep=1.5pt, label = right:{$v'_{n-1}$}] at (-3,-6) {};

\foreach \x in {1,3,5,6}{ \node[fill=black,circle,inner sep=1pt ] (n\x) at (\x+3,\x-6) {};  }
\draw (9,0) -- (3,-6); 

\node[fill=black, rectangle, inner sep=1.5pt, label = left:{$u'_{1}$}] at (3,6) {};
\draw (6+3,6-6) -- (3,2*7-8);
\node[fill=black, rectangle, inner sep=1.5pt, label = left:{$u'_{2}$}] at (3,4) {};
\draw (5+3,5-6) -- (3,2*6-8);
\node[label = center:{$\vdots$}] at (3,2) {};
\node[fill=black, rectangle, inner sep=1.5pt, label = {[xshift= -2mm, yshift=-6mm] $u'_{n-m+1}$ } ] at (3,0) {};
\draw (3+3,3-6) -- (3,2*4-8);
\node[label = center:{$\vdots$}] at (3,-2) {};
\node[fill=black, rectangle, inner sep=1.5pt, label = left:{$u'_{n-2}$}] at (3,-4) {};
\draw (1+3,1-6) -- (3,2*2-8);
\node[fill=black, rectangle, inner sep=1.5pt, label = left:{$u'_{n-1}$}] at (3,-6) {};

\draw[dashed] (-3,6) -- (3,-6);
\draw[dashed] (-3,4) -- (3,-4);
\draw[dashed] (-3,0) -- (3,0);

}
}
\tikzset
{
example_5_1/.pic=
{%
\node[fill=black,rectangle,inner sep=2pt, label = right:{$v_1$}]  at (-1.5,2) {}; 
\node[fill=black,rectangle,inner sep=2pt, label = right:{$v_2$}]  at (-1.5,1) {}; 
\node[fill=black,rectangle,inner sep=2pt, label = right:{$v_3$}]  at (-1.5,0) {}; 
\node[fill=black,rectangle,inner sep=2pt, label = right:{$v_4$}]  at (-1.5,-1) {}; 
\node[fill=black,rectangle,inner sep=2pt, label = right:{$v_5$}]  at (-1.5,-2) {}; 
\node[fill=black,rectangle,inner sep=2pt, label = left:{$u_1$}]  at (1.5,2) {}; 
\node[fill=black,rectangle,inner sep=2pt, label = left:{$u_2$}]  at (1.5,1) {}; 
\node[fill=black,rectangle,inner sep=2pt, label = left:{$u_3$}]  at (1.5,0) {}; 
\node[fill=black,rectangle,inner sep=2pt, label = left:{$u_4$}]  at (1.5,-1) {}; 
\node[fill=black,rectangle,inner sep=2pt, label = left:{$u_5$}]  at (1.5,-2) {}; 
\draw[dashed] (-1.5,2)  -- (1.5,-1);
\draw[dashed] (-1.5,1)  -- (1.5,-2);
\draw[dashed] (-1.5,0)  -- (1.5,1);
\draw[dashed] (-1.5,-1) -- (1.5,2);
\draw[dashed] (-1.5,-2) -- (1.5,0);
\draw (-1.5,2) -- (-2,1.5) -- (-2,1.5) -- (-2.5,1) -- (-2.5,1) -- (-3.5,0) -- (-1.5,-2) -- (-2,-1.5) -- (-1.5,-1);
\draw (-1.5,0) -- (-2.5,-1);
\draw (-1.5,1) -- (-3,-0.5);
\draw (1.5,2) -- (2,1.5) -- (2,1.5) -- (2.5,1) -- (2.5,1) -- (3.5,0) -- (1.5,-2) -- (2,-1.5) -- (1.5,-1);
\draw (1.5,0) -- (2.5,-1);
\draw (1.5,1) -- (3,-0.5);
\node[fill=black,circle,inner sep=1pt]  at (-2,-1.5) {};
\node[fill=black,circle,inner sep=1pt]  at (-2.5,-1) {};
\node[fill=black,circle,inner sep=1pt]  at (-3,-0.5) {};
\node[fill=black,circle,inner sep=1pt]  at (-3.5,0) {};
\node[fill=black,circle,inner sep=1pt]  at (2,-1.5) {};
\node[fill=black,circle,inner sep=1pt]  at (2.5,-1) {};
\node[fill=black,circle,inner sep=1pt]  at (3,-0.5) {};
\node[fill=black,circle,inner sep=1pt]  at (3.5,0) {};
\node at (0,-3) {\huge{$\T_1$}};
}
}
\tikzset
{
example_5_2/.pic=
{%
\node[fill=black,rectangle,inner sep=2pt, label = right:{$v_1$}]  at (-1.5,2) {}; 
\node[fill=black,rectangle,inner sep=2pt, label = right:{$v_2$}]  at (-1.5,1) {}; 
\node[fill=black,rectangle,inner sep=2pt, label = right:{$v_3$}]  at (-1.5,0) {}; 
\node[fill=black,rectangle,inner sep=2pt, label = right:{$v_4$}]  at (-1.5,-1) {}; 
\node[fill=black,rectangle,inner sep=2pt, label = right:{$v_5$}]  at (-1.5,-2) {}; 
\node[fill=black,rectangle,inner sep=2pt, label = left:{$u_1$}]  at (1.5,2) {}; 
\node[fill=black,rectangle,inner sep=2pt, label = left:{$u_2$}]  at (1.5,1) {}; 
\node[fill=black,rectangle,inner sep=2pt, label = left:{$u_3$}]  at (1.5,0) {}; 
\node[fill=black,rectangle,inner sep=2pt, label = left:{$u_4$}]  at (1.5,-1) {}; 
\node[fill=black,rectangle,inner sep=2pt, label = left:{$u_5$}]  at (1.5,-2) {}; 
\draw[dashed] (-1.5,2) -- (1.5,-1);
\draw[dashed] (-1.5,1) -- (1.5,0);
\draw[dashed] (-1.5,0) -- (1.5,-2);
\draw[dashed] (-1.5,-1) -- (1.5,2);
\draw[dashed] (-1.5,-2) -- (1.5,1);
\draw (-1.5,2) -- (-2,1.5) -- (-2,1.5) -- (-2.5,1) -- (-2.5,1) -- (-3.5,0) -- (-1.5,-2) -- (-2,-1.5) -- (-1.5,-1);
\draw (-1.5,0) -- (-2.5,-1);
\draw (-1.5,1) -- (-3,-0.5);
\draw (1.5,2) -- (2,1.5) -- (2,1.5) -- (2.5,1) -- (2.5,1) -- (3.5,0) -- (1.5,-2) -- (2,-1.5) -- (1.5,-1);
\draw (1.5,0) -- (2.5,-1);
\draw (1.5,1) -- (3,-0.5);
\node[fill=black,circle,inner sep=1pt]  at (-2,-1.5) {};
\node[fill=black,circle,inner sep=1pt]  at (-2.5,-1) {};
\node[fill=black,circle,inner sep=1pt]  at (-3,-0.5) {};
\node[fill=black,circle,inner sep=1pt]  at (-3.5,0) {};
\node[fill=black,circle,inner sep=1pt]  at (2,-1.5) {};
\node[fill=black,circle,inner sep=1pt]  at (2.5,-1) {};
\node[fill=black,circle,inner sep=1pt]  at (3,-0.5) {};
\node[fill=black,circle,inner sep=1pt]  at (3.5,0) {};
\node at (0,-3) {\huge{$\T_2$}};
}
}
\tikzset
{
example_5_card_1/.pic=
{%
\node[fill=black,rectangle,inner sep=2pt]  at (-1,1.5) {}; 
\node[fill=black,rectangle,inner sep=2pt]  at (-1,0.5) {}; 
\node[fill=black,rectangle,inner sep=2pt]  at (-1,-0.5) {}; 
\node[fill=black,rectangle,inner sep=2pt]  at (-1,-1.5) {}; 
\node[fill=black,rectangle,inner sep=2pt]  at (1,1.5) {}; 
\node[fill=black,rectangle,inner sep=2pt]  at (1,0.5) {}; 
\node[fill=black,rectangle,inner sep=2pt]  at (1,-0.5) {}; 
\node[fill=black,rectangle,inner sep=2pt]  at (1,-1.5) {}; 
\draw[dashed] (-1,1.5) -- (1,-0.5);
\draw[dashed] (-1,0.5) -- (1,-1.5);
\draw[dashed] (-1,-0.5) -- (1,1.5);
\draw[dashed] (-1,-1.5) -- (1,0.5);
\draw (-1,-1.5) -- (-2.5,0);
\draw (-1,-0.5) -- (-1.5,-1);
\draw (-1,0.5) -- (-2,-0.5);
\draw (-1,1.5) -- (-2.5,0);
\draw (1,-1.5) -- (2.5,0);
\draw (1,-0.5) -- (1.5,-1);
\draw (1,0.5) -- (2,-0.5);
\draw (1,1.5) -- (2.5,0);
\node[fill=black,circle,inner sep=1pt]  at (-1.5,-1) {};
\node[fill=black,circle,inner sep=1pt]  at (-2,-0.5) {};
\node[fill=black,circle,inner sep=1pt]  at (-2.5,0) {};
\node[fill=black,circle,inner sep=1pt]  at (1.5,-1) {};
\node[fill=black,circle,inner sep=1pt]  at (2,-0.5) {};
\node[fill=black,circle,inner sep=1pt]  at (2.5,0) {};
\node at (0,-2) {$\mathcal{T}_1 - v_3u_2=\T_1-v_4u_1=\T_1-v_5u_3$};
\node at (0,-2.5) {$\T_2-v_1u_4=\T_2-v_2u_3=\T_2-v_3u_5$};
}
}
\tikzset
{
example_5_card_2/.pic=
{%
\node[fill=black,rectangle,inner sep=2pt]  at (-1,1.5) {}; 
\node[fill=black,rectangle,inner sep=2pt]  at (-1,0.5) {}; 
\node[fill=black,rectangle,inner sep=2pt]  at (-1,-0.5) {}; 
\node[fill=black,rectangle,inner sep=2pt]  at (-1,-1.5) {}; 
\node[fill=black,rectangle,inner sep=2pt]  at (1,1.5) {}; 
\node[fill=black,rectangle,inner sep=2pt]  at (1,0.5) {}; 
\node[fill=black,rectangle,inner sep=2pt]  at (1,-0.5) {}; 
\node[fill=black,rectangle,inner sep=2pt]  at (1,-1.5) {}; 
\draw[dashed] (-1,1.5) -- (1,-0.5);
\draw[dashed] (-1,0.5) -- (1, 0.5);
\draw[dashed] (-1,-0.5) -- (1,1.5);
\draw[dashed] (-1,-1.5) -- (1,-1.5);
\draw (-1,-1.5) -- (-2.5,0);
\draw (-1,-0.5) -- (-1.5,-1);
\draw (-1,0.5) -- (-2,-0.5);
\draw (-1,1.5) -- (-2.5,0);
\draw (1,-1.5) -- (2.5,0);
\draw (1,-0.5) -- (1.5,-1);
\draw (1,0.5) -- (2,-0.5);
\draw (1,1.5) -- (2.5,0);
\node[fill=black,circle,inner sep=1pt]  at (-1.5,-1) {};
\node[fill=black,circle,inner sep=1pt]  at (-2,-0.5) {};
\node[fill=black,circle,inner sep=1pt]  at (-2.5,0) {};
\node[fill=black,circle,inner sep=1pt]  at (1.5,-1) {};
\node[fill=black,circle,inner sep=1pt]  at (2,-0.5) {};
\node[fill=black,circle,inner sep=1pt]  at (2.5,0) {};
\node at (0,-2) {$\mathcal{T}_1 - v_1u_4=\T_1-v_2u_5$};
\node at (0,-2.5) {$\T_2-v_4u_1=\T_2-v_5u_3$};
}
}
\tikzset
{
example_6_1/.pic=
{%
\node[fill=black,rectangle,inner sep=2pt, label = right:{\scalebox{1.4}{$v_1$}}]  at (-1,3) {}; 
\node[fill=black,rectangle,inner sep=2pt, label = {[yshift=-0.2cm]right:{\scalebox{1.4}{$v_2$}}} ]  at (-1,2) {}; 
\node[fill=black,rectangle,inner sep=2pt, label = right:{\scalebox{1.4}{$v_3$}}]  at (-1,1) {}; 
\node[fill=black,rectangle,inner sep=2pt, label = {[yshift=0.2cm]right:{\scalebox{1.4}{$u_4$}}} ]  at (-1,0) {}; 
\node[fill=black,rectangle,inner sep=2pt, label = right:{\scalebox{1.4}{$v_5$}}]  at (-1,-1) {}; 
\node[fill=black,rectangle,inner sep=2pt, label = {[yshift=0.2cm]right:{\scalebox{1.4}{$v_6$}}} ]  at (-1,-2) {}; 
\node[fill=black,rectangle,inner sep=2pt, label = left:{\scalebox{1.4}{$u_1$}}] at (1,3) {}; 
\node[fill=black,rectangle,inner sep=2pt, label = left:{\scalebox{1.4}{$u_2$}}]  at (1,2) {}; 
\node[fill=black,rectangle,inner sep=2pt, label = left:{\scalebox{1.4}{$u_3$}}]  at (1,1) {}; 
\node[fill=black,rectangle,inner sep=2pt, label = {[yshift=0.2cm]left:{\scalebox{1.4}{$u_4$}}} ]  at (1,0) {}; 
\node[fill=black,rectangle,inner sep=2pt, label = left:{\scalebox{1.4}{$u_5$}}]  at (1,-1) {}; 
\node[fill=black,rectangle,inner sep=2pt, label = {[yshift=0.2cm]left:{\scalebox{1.4}{$u_6$}}} ]  at (1,-2) {}; 
\draw[dashed] (-1,3) -- (1,1);
\draw[dashed] (-1,2) -- (1,3);
\draw[dashed] (-1,1) -- (1,-1);
\draw[dashed] (-1,0) -- (1,0);
\draw[dashed] (-1,-1) -- (1,2);
\draw[dashed] (-1,-2) -- (1,-2);
\draw (-1,2) -- (-1.5,1.5); 
\draw (-1.5,-1.5) -- (-1,-1);
\draw (-1,0) -- (-2,-1);
\draw (-2,2) -- (-1,1); 
\draw (-1,3) -- (-3.5,0.5) -- (-1,-2);
\draw (1,2) --  (1.5,1.5)  -- (2,1) -- (3,0) -- (1.5,-1.5) -- (1,-1);
\draw (1,0) -- (1.5,0.5);
\draw (1,1) -- (2.5,-0.5);
\draw (1,3) -- (3.5,0.5) -- (1,-2);
\node[fill=black,circle,inner sep=1pt]  at (-2,2) {};
\node[fill=black,circle,inner sep=1pt]  at (-1.5,1.5) {};
\node[fill=black,circle,inner sep=1pt]  at (-1.5,-1.5) {};
\node[fill=black,circle,inner sep=1pt]  at (-2,-1) {};
\node[fill=black,circle,inner sep=1pt]  at (-3.5,0.5) {};
\node[fill=black,circle,inner sep=1pt]  at (1.5,-1.5) {};
\node[fill=black,circle,inner sep=1pt]  at (1.5,0.5) {};
\node[fill=black,circle,inner sep=1pt]  at (2.5,-0.5) {};
\node[fill=black,circle,inner sep=1pt]  at (3,0) {};
\node[fill=black,circle,inner sep=1pt]  at (3.5,0.5) {};
\node at (0,-3) {\huge{$\T_1$}};
}
}
\tikzset
{
example_6_2/.pic=
{%
\node[fill=black,rectangle,inner sep=2pt, label = right:{\scalebox{1.4}{$v_1$}}]  at (-1,3) {}; 
\node[fill=black,rectangle,inner sep=2pt, label = {[yshift=-0.2cm]right:{\scalebox{1.4}{$v_2$}}} ]  at (-1,2) {}; 
\node[fill=black,rectangle,inner sep=2pt, label = {[yshift=0.2cm]right:{\scalebox{1.4}{$u_3$}}} ]  at (-1,1) {}; 
\node[fill=black,rectangle,inner sep=2pt, label = {[yshift=0.2cm]right:{\scalebox{1.4}{$u_4$}}} ]  at (-1,0) {}; 
\node[fill=black,rectangle,inner sep=2pt, label = right:{\scalebox{1.4}{$v_5$}}]  at (-1,-1) {}; 
\node[fill=black,rectangle,inner sep=2pt, label = {[yshift=0.2cm]right:{\scalebox{1.4}{$v_6$}}} ]  at (-1,-2) {}; 
\node[fill=black,rectangle,inner sep=2pt, label = left:{\scalebox{1.4}{$u_1$}}] at (1,3) {}; 
\node[fill=black,rectangle,inner sep=2pt, label = left:{\scalebox{1.4}{$u_2$}}]  at (1,2) {}; 
\node[fill=black,rectangle,inner sep=2pt, label = {[yshift=0.2cm]left:{\scalebox{1.4}{$u_3$}}} ]  at (1,1) {}; 
\node[fill=black,rectangle,inner sep=2pt, label = {[yshift=0.2cm]left:{\scalebox{1.4}{$u_4$}}} ]  at (1,0) {}; 
\node[fill=black,rectangle,inner sep=2pt, label = left:{\scalebox{1.4}{$u_5$}}]  at (1,-1) {}; 
\node[fill=black,rectangle,inner sep=2pt, label = {[yshift=0.2cm]left:{\scalebox{1.4}{$u_6$}}} ]  at (1,-2) {}; 
\draw[dashed] (-1,3) -- (1,-1);
\draw[dashed] (-1,2) -- (1,3);
\draw[dashed] (-1,1) -- (1,1);
\draw[dashed] (-1,0) -- (1,0);
\draw[dashed] (-1,-1) -- (1,2);
\draw[dashed] (-1,-2) -- (1,-2);
\draw (-1,2) -- (-1.5,1.5); 
\draw (-1.5,-1.5) -- (-1,-1);
\draw (-1,0) -- (-2,-1);
\draw (-2,2) -- (-1,1); 
\draw (-1,3) -- (-3.5,0.5) -- (-1,-2);
\draw (1,2) -- (1.5,1.5)  -- (2,1) -- (3,0) -- (1.5,-1.5) -- (1,-1);
\draw (1,0) -- (1.5,0.5);
\draw (1,1) -- (2.5,-0.5);
\draw (1,3) -- (3.5,0.5) -- (1,-2);
\node[fill=black,circle,inner sep=1pt]  at (-2,2) {};
\node[fill=black,circle,inner sep=1pt]  at (-1.5,1.5) {};
\node[fill=black,circle,inner sep=1pt]  at (-1.5,-1.5) {};
\node[fill=black,circle,inner sep=1pt]  at (-2,-1) {};
\node[fill=black,circle,inner sep=1pt]  at (-3.5,0.5) {};
\node[fill=black,circle,inner sep=1pt]  at (1.5,-1.5) {};
\node[fill=black,circle,inner sep=1pt]  at (1.5,0.5) {};
\node[fill=black,circle,inner sep=1pt]  at (2.5,-0.5) {};
\node[fill=black,circle,inner sep=1pt]  at (3,0) {};
\node[fill=black,circle,inner sep=1pt]  at (3.5,0.5) {};
\node at (0,-3) {\huge{$\T_2$}};
}
}
\tikzset
{
example_6_card_1/.pic=
{%
\node[fill=black,rectangle,inner sep=2pt]  at (-1,2) {}; 
\node[fill=black,rectangle,inner sep=2pt]  at (-1,1) {}; 
\node[fill=black,rectangle,inner sep=2pt]  at (-1,0) {}; 
\node[fill=black,rectangle,inner sep=2pt]  at (-1,-1) {}; 
\node[fill=black,rectangle,inner sep=2pt]  at (-1,-2) {}; 
\draw[dashed] (-1,2) -- (1,1);
\draw[dashed] (-1,1) -- (1,-1);
\draw[dashed] (-1,0) -- (1,0);
\draw[dashed] (-1,-1) -- (1,2);
\draw[dashed] (-1,-2) -- (1,-2);
\draw (-1,2) -- (-1.5,1.5) -- (-2,1) -- (-3,0) -- (-1,-2) -- (-1.5,-1.5) -- (-1,-1);
\draw (-1,0) -- (-2,-1);
\draw (-1,1) -- (-1.4,1.5) ; 
\node[fill=black,rectangle,inner sep=2pt]  at (1,2) {}; 
\node[fill=black,rectangle,inner sep=2pt,]  at (1,1) {}; 
\node[fill=black,rectangle,inner sep=2pt]  at (1,0) {}; 
\node[fill=black,rectangle,inner sep=2pt]  at (1,-1) {}; 
\node[fill=black,rectangle,inner sep=2pt]  at (1,-2) {}; 
\draw (1,2)  -- (1.5,1.5) -- (2,1) -- (3,0) -- (1.5,-1.5) -- (1,-1);
\draw (1,0) -- (1.5,0.5);
\draw (1,1) -- (2.5,-0.5);
\draw (3,0) -- (1,-2);
\node[fill=black,circle,inner sep=1pt]  at (1.5,-1.5) {};
\node[fill=black,circle,inner sep=1pt]  at (1.5,0.5) {};
\node[fill=black,circle,inner sep=1pt]  at (2.5,-0.5) {};
\node[fill=black,circle,inner sep=1pt]  at (3,0) {};
\node at (0,-2.5) {$\mathcal{T}_1 - v_2u_1 = \mathcal{T}_1 - v_5u_2$};
\node at (0,-3) {$\mathcal{T}_2 - v_2u_1 = \mathcal{T}_2 - v_5u_2$};
}
}
\tikzset
{
example_6_card_2/.pic=
{%
\node[fill=black,rectangle,inner sep=2pt]  at (-1,2) {}; 
\node[fill=black,rectangle,inner sep=2pt]  at (-1,1) {}; 
\node[fill=black,rectangle,inner sep=2pt]  at (-1,0) {}; 
\node[fill=black,rectangle,inner sep=2pt]  at (-1,-1) {}; 
\node[fill=black,rectangle,inner sep=2pt]  at (-1,-2) {}; 
\draw[dashed] (-1,2) -- (1,2);
\draw[dashed] (-1,1) -- (1,-1);
\draw[dashed] (-1,0) -- (1,0);
\draw[dashed] (-1,-1) -- (1,1);
\draw[dashed] (-1,-2) -- (1,-2);
\draw (-1,2) -- (-1.5,1.5)  -- (-2,1) -- (-3,0) -- (-1,-2) -- (-1.5,-1.5) -- (-1,-1);
\draw (-1,0) -- (-2,-1);
\draw (-1,1) -- (-1.5,1.5) ; 
\node[fill=black,circle,inner sep=1pt]  at (-1.5,1.5) {};
\node[fill=black,circle,inner sep=1pt]  at (-1.5,-1.5) {};
\node[fill=black,circle,inner sep=1pt]  at (-2,-1) {};
\node[fill=black,circle,inner sep=1pt]  at (-3,0) {};
\node[fill=black,rectangle,inner sep=2pt]  at (1,2) {}; 
\node[fill=black,rectangle,inner sep=2pt]  at (1,1) {}; 
\node[fill=black,rectangle,inner sep=2pt]  at (1,0) {}; 
\node[fill=black,rectangle,inner sep=2pt]  at (1,-1) {}; 
\node[fill=black,rectangle,inner sep=2pt]  at (1,-2) {}; 
\draw (1,2) -- (1.5,1.5)  -- (2,1) -- (3,0) -- (1,-2) -- (1.5,-1.5) -- (1,-1);
\draw (1,0) -- (2,-1);
\draw (1,1) -- (2.5,-0.5);
\node[fill=black,circle,inner sep=1pt]  at (1.5,-1.5) {};
\node[fill=black,circle,inner sep=1pt]  at (2,-1) {};
\node[fill=black,circle,inner sep=1pt]  at (2.5,-0.5) {};
\node[fill=black,circle,inner sep=1pt]  at (3,0) {};
\node at (0,-2.5) {$\mathcal{T}_1 - v_1u_3$};
\node at (0,-3) {$\T_2 - v_3u_3$};
}
}
\tikzset
{
example_6_card_3/.pic=
{%
\node[fill=black,rectangle,inner sep=2pt]  at (-1,2) {}; 
\node[fill=black,rectangle,inner sep=2pt]  at (-1,1) {}; 
\node[fill=black,rectangle,inner sep=2pt]  at (-1,0) {}; 
\node[fill=black,rectangle,inner sep=2pt]  at (-1,-1) {}; 
\node[fill=black,rectangle,inner sep=2pt]  at (-1,-2) {}; 
\draw[dashed] (-1,2) -- (1,2);
\draw[dashed] (-1,1) -- (1,-1);
\draw[dashed] (-1,0) -- (1,-2);
\draw[dashed] (-1,-1) -- (1,1);
\draw[dashed] (-1,-2) -- (1,0);
\draw (-1,2)  -- (-1.5,1.5) -- (-2,1) -- (-3,0) -- (-1,-2) -- (-1.5,-1.5) -- (-1,-1);
\draw (-1,0) -- (-2,-1);
\draw (-1,1) -- (-1.5,1.5) ; 
\node[fill=black,circle,inner sep=1pt]  at (-1.5,1.5) {};
\node[fill=black,circle,inner sep=1pt]  at (-1.5,-1.5) {};
\node[fill=black,circle,inner sep=1pt]  at (-2,-1) {};
\node[fill=black,circle,inner sep=1pt]  at (-3,0) {};
\node[fill=black,rectangle,inner sep=2pt]  at (1,2) {}; 
\node[fill=black,rectangle,inner sep=2pt]  at (1,1) {}; 
\node[fill=black,rectangle,inner sep=2pt]  at (1,0) {}; 
\node[fill=black,rectangle,inner sep=2pt]  at (1,-1) {}; 
\node[fill=black,rectangle,inner sep=2pt]  at (1,-2) {}; 
\draw (1,2) -- (1.5,1.5)  -- (2,1) -- (3,0) -- (1,-2) -- (1.5,-1.5) -- (1,-1);
\draw (1,0) -- (2,-1);
\draw (1,1) -- (2.5,-0.5);
\node[fill=black,circle,inner sep=1pt]  at (1.5,-1.5) {};
\node[fill=black,circle,inner sep=1pt]  at (2,-1) {};
\node[fill=black,circle,inner sep=1pt]  at (2.5,-0.5) {};
\node[fill=black,circle,inner sep=1pt]  at (3,0) {};
\node at (0,-2.5) {$\mathcal{T}_1 - v_3u_5$};
\node at (0,-3) {$\T_2 - v_1u_5$};
}
}
\tikzset
{
example_6_card_4/.pic=
{%
\node[fill=black,rectangle,inner sep=2pt]  at (-1,2) {}; 
\node[fill=black,rectangle,inner sep=2pt]  at (-1,1) {}; 
\node[fill=black,rectangle,inner sep=2pt]  at (-1,0) {}; 
\node[fill=black,rectangle,inner sep=2pt]  at (-1,-1) {}; 
\node[fill=black,rectangle,inner sep=2pt]  at (-1,-2) {}; 
\draw[dashed] (-1,2) -- (1,1);
\draw[dashed] (-1,1) -- (1,-1);
\draw[dashed] (-1,0) -- (1,0);
\draw[dashed] (-1,-1) -- (1,2);
\draw[dashed] (-1,-2) -- (1,-2);
\draw (-1,2) --  (-1.5,1.5) -- (-2,1) -- (-3,0) -- (-1,-2) -- (-1.5,-1.5) -- (-1,-1);
\draw (-1,0) -- (-2,-1);
\draw (-1,1) -- (-1.5,1.5) ; 
\node[fill=black,circle,inner sep=1pt]  at (-1.5,1.5) {};
\node[fill=black,circle,inner sep=1pt]  at (-1.5,-1.5) {};
\node[fill=black,circle,inner sep=1pt]  at (-2,-1) {};
\node[fill=black,circle,inner sep=1pt]  at (-3,0) {};
\node[fill=black,rectangle,inner sep=2pt]  at (1,2) {}; 
\node[fill=black,rectangle,inner sep=2pt]  at (1,1) {}; 
\node[fill=black,rectangle,inner sep=2pt]  at (1,0) {}; 
\node[fill=black,rectangle,inner sep=2pt]  at (1,-1) {}; 
\node[fill=black,rectangle,inner sep=2pt]  at (1,-2) {}; 
\draw (1,2)  -- (1.5,1.5) -- (2,1) -- (3,0) -- (1,-2) -- (1.5,-1.5) -- (1,-1);
\draw (1,0) -- (2,-1);
\draw (1,1) -- (2.5,-0.5);
\node[fill=black,circle,inner sep=1pt]  at (1.5,-1.5) {};
\node[fill=black,circle,inner sep=1pt]  at (2,-1) {};
\node[fill=black,circle,inner sep=1pt]  at (2.5,-0.5) {};
\node[fill=black,circle,inner sep=1pt]  at (3,0) {};
\node at (0,-2.5) {$\mathcal{T}_1 - v_4u_4$};
\node at (0,-3) {$\T_2 - v_6u_6$};
}
}
\tikzset
{
example_6_card_5/.pic=
{%
\node[fill=black,rectangle,inner sep=2pt]  at (-1,2) {}; 
\node[fill=black,rectangle,inner sep=2pt]  at (-1,1) {}; 
\node[fill=black,rectangle,inner sep=2pt]  at (-1,0) {}; 
\node[fill=black,rectangle,inner sep=2pt]  at (-1,-1) {}; 
\node[fill=black,rectangle,inner sep=2pt]  at (-1,-2) {}; 
\draw[dashed] (-1,2) -- (1,1);
\draw[dashed] (-1,1) -- (1,-1);
\draw[dashed] (-1,0) -- (1,-2);
\draw[dashed] (-1,-1) -- (1,2);
\draw[dashed] (-1,-2) -- (1,0);
\draw (-1,2)  -- (-1.5,1.5)  -- (-2,1) -- (-3,0) -- (-1,-2) -- (-1.5,-1.5) -- (-1,-1);
\draw (-1,0) -- (-2,-1);
\draw (-1,1) -- (-1.5,1.5) ; 
\node[fill=black,circle,inner sep=1pt]  at (-1.5,1.5) {};
\node[fill=black,circle,inner sep=1pt]  at (-1.5,-1.5) {};
\node[fill=black,circle,inner sep=1pt]  at (-2,-1) {};
\node[fill=black,circle,inner sep=1pt]  at (-3,0) {};
\node[fill=black,rectangle,inner sep=2pt]  at (1,2) {}; 
\node[fill=black,rectangle,inner sep=2pt]  at (1,1) {}; 
\node[fill=black,rectangle,inner sep=2pt]  at (1,0) {}; 
\node[fill=black,rectangle,inner sep=2pt]  at (1,-1) {}; 
\node[fill=black,rectangle,inner sep=2pt]  at (1,-2) {}; 
\draw (1,2)  -- (1.5,1.5)  -- (2,1) -- (3,0) -- (1,-2) -- (1.5,-1.5) -- (1,-1);
\draw (1,0) -- (2,-1);
\draw (1,1) -- (2.5,-0.5);
\node[fill=black,circle,inner sep=1pt]  at (1.5,-1.5) {};
\node[fill=black,circle,inner sep=1pt]  at (2,-1) {};
\node[fill=black,circle,inner sep=1pt]  at (2.5,-0.5) {};
\node[fill=black,circle,inner sep=1pt]  at (3,0) {};
\node at (0,-2.5) {$\mathcal{T}_1 - v_6u_6$};
\node at (0,-3) {$\T_2 - v_4u_4$};
}
}
\title[Reconstruction of Caterpillar Tanglegrams]{Reconstruction of Caterpillar Tanglegrams}
 \author[A. Clifton]{Ann Clifton}
 \author[\'E. Czabarka]{\'Eva Czabarka}
 \author[K. Liu]{Kevin Liu} 
 \author[S. Loeb]{Sarah Loeb}
\author[U. Okur]{Utku Okur}
\author[L. Sz\'ekely]{L\'aszl\'o Sz\'ekely}
\author[K. Wicke]{Kristina Wicke}
 \address{Ann Clifton\\ Louisiana Tech University}
 \email{aclifton@latech.edu}
 \address{\'Eva Czabarka\\ University of South Carolina}
 \email{czabarka@math.sc.edu}
 \address{Kevin Liu\\The University of the South}
 \email{keliu@sewanee.edu}
 \address{Sarah Loeb\\ Hampden-Sydney College}
 \email{sloeb@hsc.edu}
\address{Utku Okur\\ University of South Carolina}
\email{uokur@email.sc.edu}
\address{L\'aszl\'o Sz\'ekely\\ University of South Carolina}
\email{szekely@math.sc.edu}
\address{Kristina Wicke\\  New Jersey Institute of Technology}
\email{kristina.wicke@njit.edu}
\begin{document}

\begin{abstract}
A tanglegram consists of two rooted binary trees with the same number of leaves and a perfect matching between the leaves of the trees. Given a size-$n$ tanglegram, i.e., a tanglegram for two trees with $n$ leaves, a multiset of induced size-$(n-1)$ tanglegrams is obtained by deleting a pair of matched leaves in every possible way. Here, we analyze whether a size-$n$ tanglegram is uniquely encoded by this multiset of size-$(n-1)$ tanglegrams. We answer this question affirmatively in the case that at least one of the two trees of the tanglegram is a caterpillar tree. 
\end{abstract}
\maketitle

\noindent \textit{Keywords:} tanglegram, rooted binary tree, caterpillar, graph reconstruction \\
\textit{2020 Mathematics Subject Classification:}  05C05, 05C10, 05C60 \\

\section{Introduction}
A \emph{tanglegram} $\T$ consists of two rooted binary trees $L$ and $R$ with the same number of leaves and a perfect matching $\sigma$ between the two leaf sets. In biology, tanglegrams are used in the study of cospeciation and coevolution. For example, tree $L$ may represent the phylogeny of a host, tree $R$ the phylogeny of a parasite, and the matching $\sigma$ the connection between the host and the parasite \cite{Matsen2015,Venkatachalam2009}. Tanglegrams lead to a variety of mathematical questions, including their enumeration \cite{Billey2017,Gessel2021,Wagner2018}, drawing them with the fewest number of crossings possible \cite{buchin2012,Fernau2005,Venkatachalam2009}, and their typical shape~\cite{Konvalinka2016}. 

In this paper, we study the \emph{tanglegram reconstruction problem} originally posed by  Stephan Wagner and L\'aszl\'o Sz\'ekely in 2017: Given a tanglegram  $\T$ of size $n$ (where $n$ is the number of leaves in $L$ or $R$), does the multiset $\D^m(\T)$ of induced size-$(n-1)$ tanglegrams obtained from $\T$ by deleting a pair of matched leaves from $\T$ in every possible way uniquely determine $\T$ for $n$ sufficiently large? This problem is motivated by the famous graph reconstruction conjecture that states that every graph $G$ on at least three vertices is uniquely determined (up to isomorphism) from the multiset of graphs obtained by deleting one vertex in every possible way from $G$~\cite{Kelly1942, Ulam1960}. This conjecture has been proven for many special classes of graphs but remains open in general (see \cite{Bondy1977, Harary1974, Ramachandran2004} for an overview). It has also motivated various other reconstruction problems such as for partially ordered sets~\cite{Kratsch1994}, permutations~\cite{Smith2006}, and groups~\cite{Radcliffe2006}. 

Here, we affirmatively answer the tanglegram reconstruction problem for the case that at least one of the two trees of the tanglegram is a caterpillar tree, where a \emph{caterpillar} tree is a tree whose internal vertices form a path, such that the root of the tree is an endpoint of the path. We establish the following main theorem.

\begin{theorem}\label{mainthm1}
    Let $\mathcal{T}=(L,R,\sigma)$ be a tanglegram of size $n\geq 6$. If $L$ or $R$ is a caterpillar, then $T$ is reconstructable from the multideck $\Dm(\T)$.
\end{theorem}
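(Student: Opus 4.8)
The plan is to exploit the rigidity that a caterpillar side imposes and reduce the problem to reconstructing a labeled object from its single-deletion deck. First I would record the reconstructible invariants of the deck. The order $n$ is reconstructible, and I expect that the property ``$L$ (respectively $R$) is a caterpillar'' is reconstructible from $\Dm(\T)$ for $n\ge5$: deleting a matched pair from a caterpillar side always yields a caterpillar side, whereas if a side is not a caterpillar then (for $n\ge5$) some deletion exposes a non-caterpillar shape on that side in a card. Establishing this lets me assume without loss of generality that $L$ is a caterpillar, and also detect whether $R$ is a caterpillar.

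Next I would set up the key dictionary. Label the leaves of $L$ as $v_1,\dots,v_n$ along the spine, where $v_1$ is the leaf-child of the root and $\{v_{n-1},v_n\}$ is the unique bottom cherry; since every other leaf hangs at a distinct spine depth, $\aut(L)=\{\mathrm{id},(v_{n-1}\,v_n)\}$. Transporting this order through $\sigma$ labels the leaves of $R$ by $\{1,\dots,n\}$, so a tanglegram with caterpillar left tree is exactly a rooted binary tree $R$ with a bijective leaf-labeling, taken up to swapping the two largest labels. Under this dictionary, deleting the matched pair in spine position $i$ becomes: delete the leaf of $R$ labeled $i$, suppress, and standardize the remaining labels to $\{1,\dots,n-1\}$. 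Thus $\Dm(\T)$ becomes the multiset of single-deletion cards of a leaf-labeled tree, and Theorem~\ref{mainthm1} becomes the assertion that such a labeled tree is reconstructible from its deletion deck for $n\ge5$.

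With the dictionary in hand I would reconstruct in two stages. The unlabeled right trees of the cards form the leaf-deletion deck of the unlabeled tree $R$, so I would first prove a tree-reconstruction lemma recovering the shape of $R$ for $n\ge5$ (the bound is sharp, as at $n=4$ the caterpillar and the balanced tree share a deck). It then remains to recover the labeling, i.e.\ the matching, and here I would split on whether $R$ is a caterpillar. If $R$ is also a caterpillar, the labeled tree degenerates to a permutation—the matching between two linearly ordered leaf sets—and recovering it is exactly permutation reconstruction from single deletions, which I expect to succeed for $n\ge5$ up to the natural symmetries (cf.~\cite{Smith2006}); this extremal, most symmetric case should drive the hypothesis $n\ge5$. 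If $R$ is not a caterpillar, I would use the asymmetry of $R$, such as a cherry or branching not at an extreme of the spine, to anchor the labels: such a feature identifies, across cards, which spine position was deleted, which pins down the induced order on the leaves of each fixed subtree of $R$ and lets the reconstruction propagate recursively across the root split of $R$.

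The main obstacle is the same one that makes sequence reconstruction delicate: because the cards are unlabeled, deleting position $i$ restandardizes all larger labels, so I never directly know which position a given card came from. Overcoming this is precisely where the caterpillar rigidity (in the asymmetric case) and the permutation-reconstruction input (in the symmetric case) must do the work, and it is the source of the small-$n$ exceptions forcing $n\ge5$. A secondary technical point, to be tracked throughout, is the ambiguity from the bottom cherries of $L$ and of $R$ (the label swap and the automorphisms of $R$), so that the recovered labeled tree is well defined exactly up to tanglegram isomorphism.
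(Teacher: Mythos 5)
Your overall skeleton (recover both tree shapes from $\Dm(\T)$, then split on whether $R$ is a caterpillar) matches the paper's, but the two places where you outsource the real work both contain genuine gaps. The most serious is the doubly-caterpillar case. Your dictionary identifies a tanglegram $(C_n,C_n,\sigma)$ with a permutation only up to the action of $\aut(C_n)\times\aut(C_n)\cong(\mathbb{Z}/2)^2$ (swapping the two deepest leaves on either side), and, crucially, each card is likewise only given as an equivalence class under the corresponding action on permutations of length $n-1$. So what you actually need is not the cited result that a permutation of length $n\ge 5$ is determined by the multiset of its one-entry-deleted patterns, but the stronger statement that an \emph{equivalence class} of permutations is determined by the \emph{multiset of equivalence classes} of its patterns. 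These are not interchangeable: two inequivalent permutations could have distinct pattern multisets that collapse to the same multiset of classes (say, one having two distinct but equivalent minors where the other has a repeated minor), and the off-the-shelf theorem says nothing about this situation. The paper does not invoke permutation reconstruction at all; it proves the doubly-caterpillar case directly (Lemmas~\ref{lem:catcathelper} and~\ref{thm:cat-cat}) by locating maximal runs of spine leaves $v_1,\dots,v_m$ matched consecutively on the other side, counting card multiplicities to identify the specific card $\T-v_1u_i$, and rebuilding $\T$ from it --- precisely the work your reduction was meant to avoid.

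The second gap is the asymmetric case. The claim that a cherry or branching feature of $R$ ``identifies, across cards, which spine position was deleted'' is exactly the statement that needs proof, and it is false as literally stated: many deletions produce identical cards (the paper's arguments repeatedly exploit blocks of $s+1$ identical cards), so no feature of a single card can tell you which position was deleted. What actually works, and what the paper does after splitting $R$ into strippable and non-strippable types (Lemmas~\ref{lem:cat-type1} and~\ref{lem:cat-type2}), is to consider the maximal run $v_n,v_{n-1},\dots,v_{n-s}$ of deepest spine leaves matched into one part of $R$, determine $s$ and the length of the following run by counting how many cards shorten the run, and then combine two or three carefully chosen cards --- one missing a leaf from each part of $R$ --- to recover the entire matching. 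Your proposal would need to supply an argument at this level of detail; as written it asserts the conclusion rather than proving it.
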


The remainder of the paper is organized as follows. In Section~\ref{Sec:Preliminaries}, we introduce general terminology and notation. We then define different types of trees in Section~\ref{sec:types}. In Section~\ref{sec:small} we discuss tanglegrams of size at most $5$ -- in particular we explain that Theorem~\ref{mainthm1} is best possible in the sense that the lower bound $n\ge 6$ can not be changed to $n\ge 5$. 
In Sections~\ref{Sec:CatergramReconstruction}--\ref{Sec:Type2Reconstruction}, we consider the tanglegram reconstruction problem involving caterpillar trees, distinguishing three cases, depending on the type of the tree the caterpillar is matched with.
 In Section~\ref{sec:conclusion} we finally prove Theorem~\ref{mainthm1} and pose a remaining problem.

\section{Preliminaries}\label{Sec:Preliminaries}
We begin by introducing terminology and notation.

\begin{definition}\label{def:tree}
A \emph{rooted tree} is a tree $T$ where a special vertex, denoted $r_T$, is identified as the root of $T$.
The \emph{parent} of a non-root vertex $v$ is $v$'s neighbor on the $r_T-v$ path in $T$ and $y$ is a \emph{child} of $v$ if $v$ is the parent of $y$.
The \emph{leaves} of $T$ are the vertices with no children. Two leaves with a common parent are called a \emph{cherry}. We let $\mathbb{L}(T)$ denote the set of leaves and $\size{T}$ denote the number of leaves of $T$. We refer to $\size{T}$ as the \emph{size} of $T$.
\end{definition} 

\begin{definition}\label{def:binary-tree}
    A \emph{rooted binary tree} is a rooted tree in which every vertex has zero or two children.  
\end{definition}
   
We often simply refer to this as a \emph{tree}, as we will not consider other types of trees. Note that in a rooted binary tree we have that either the root is a leaf (and the tree has size 1) or the root has degree 2 and non-root internal vertices have degree 3.

We remark that whenever we write $T = T'$, we mean that $T$ and $T'$ are equal up to an isomorphism that maps root to root. 

\begin{definition}\label{def:treedrawing}
    If $T$ is a rooted binary tree, a \emph{standard drawing of $T$} is a plane drawing of $T$ such that its leaves lie on a straight line (called the \emph{leaf-line of $T$}), the tree is drawn with straight lines on one of the two half-planes the line defines in the plane, and if $x,y$ are vertices of $T$ such that $x$ is the parent of $y$, then the distance of $x$ from the leaf-line is larger than the distance of $y$ from the leaf-line.  
\end{definition}

In general, a rooted binary tree has multiple standard drawings, as shown in Figure~\ref{fig:treeexamples}.

\begin{figure}[htbp]
\centering 
\scalebox{0.75}{
    \begin{tikzpicture}
        \pic at (-6,0) {tree5.1}; 
        \draw (0,0) -- (2,2) -- (2.5,1.5) -- (1,0);
        \draw (2.5,1.5) -- (3,1) -- (2,0);
        \draw (2.5,0.5) -- (3,0);
        \draw (3,1) -- (4,0);
        \node[fill=black,rectangle,inner sep=2pt]  at (0,0) {};      
        \node[fill=black,rectangle,inner sep=2pt]  at (1,0) {};      
        \node[fill=black,rectangle,inner sep=2pt]  at (2,0) {}; 
        \node[fill=black,rectangle,inner sep=2pt]  at (3,0) {}; 
        \node[fill=black,rectangle,inner sep=2pt]  at (4,0) {}; 
        \node[fill=black,circle,inner sep=1pt]  at (2,2) {};
        \node[fill=black,circle,inner sep=1pt]  at (2.5,1.5) {};
        \node[fill=black,circle,inner sep=1pt]  at (3,1) {};
        \node[fill=black,circle,inner sep=1pt]  at (2.5,0.5) {};
    \end{tikzpicture}
    }
    \caption{Two standard drawings for a rooted binary tree.}
    \label{fig:treeexamples}
\end{figure}

\begin{definition}\label{def:decomposition}
    Let $T_1, T_2$ be rooted binary trees and let $T_1 \oplus T_2$ be the rooted binary tree $T$ obtained by taking a new vertex $r_T$ and joining the roots of $T_1$ and $T_2$ to $r_T$. If $T = T_1 \oplus T_2$, we say \emph{$T$ is composed of $T_1$ and $T_2$}. Furthermore, we call $T_1$ and $T_2$ the \emph{pending subtrees} of $T$.
\end{definition}

The operation $\oplus$ is commutative. However, if not stated otherwise, for $T = T_1 \oplus T_2$, we assume $\size{T_1}  \leq \size{T_2}$.

\begin{definition}\label{def:caterpillar}
    The \emph{caterpillar} on $n$ leaves, denoted $C_n$, is the rooted binary tree defined recursively as follows:  $C_1$ is the unique rooted binary tree of size 1, and for $n>1$, $C_n=C_1 \oplus C_{n-1}$.
\end{definition}

Alternatively, one can view the caterpillar of size $n>1$ as the unique tree in which the removal of all leaves results in the path graph on $n-1$ vertices, such that the root of the tree is an endvertex of the path. Note that the tree in Fig.~\ref{fig:treeexamples} is $C_5$.

We next turn to induced subtrees and decks. We first recall an operation on graphs. Let $G$ be a graph and let $w$ be a degree-2 vertex with neighbors $u$ and $v$. Then, \emph{suppressing} $w$ means deleting $w$ and its two incident edges, $uw$ and $vw$, and introducing the edge $uv$. Note that this is the same as contracting either of the edges $uw$ or $vw$.

\begin{definition}\label{def:inducedsubtree}
Let $T$ be a rooted binary tree with leaf set $\mathbb{L}(T)$ and let $S\subseteq \mathbb{L}(T)$. The \emph{rooted binary tree induced by $S$}, denoted $T[S]$, is the tree formed by starting with the minimal subtree of $T$ containing all the vertices in $S$, declaring as root its closest vertex to the root of $T$, and then suppressing all non-root degree-2 vertices. We also refer to $T[S]$ as an \emph{induced subtree} of $T$.
\end{definition}

\begin{definition}\label{def:deck} 
The \emph{deck} $\mathcal{D}(T)$ of $T$ is the set of trees $T'$ with $\size{T'}=\size{T}-1$ such that $T'$ is an induced subtree of $T$. The \emph{multideck $\Dm(T)$} is a multiset with the same elements as the deck, but they come with multiplicities. To determine these multiplicities, label the leaves of $T$ arbitrarily, and denote the set of labeled leaves by $\mathbb{L}^*(T)$.
The multiplicity of an element $T'\in\mathcal{D}(T)$ in $\Dm(T)$ is $\left\vert\{S\subseteq\mathbb{L}^*(T): T'=T[S]\}\right\vert$.
\end{definition}

We often refer to the elements of $\D(T)$ (respectively $\D^m(T)$) as \emph{cards}.

\begin{definition}\label{def:reconstructable}
A 
tree $T$ is \emph{reconstructable} from its deck (resp. multideck) if 
for any tree $T'$, $\D(T')=\D(T)$ (resp. $\Dm(T')=\Dm(T)$) implies $T=T'$.
\end{definition}

The following result shows that for sufficiently large trees, trees are reconstructable from decks and multidecks.

\begin{theorem}[\cite{Clifton2025}]\label{thm:tree_reconstruction}
    For any tree of size $n\geq 5$, $T$ is reconstructable from the multideck $\Dm(T)$. If $n\geq 6$, then $T$ is reconstructable from the deck $\D(T)$. 
\end{theorem}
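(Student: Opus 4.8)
The plan is to reduce the reconstruction of $T$ to recovering the unordered pair of maximal pending subtrees $\{T_1,T_2\}$ with $T=T_1\oplus T_2$, and to proceed by induction on $n$. Since this pair determines $T$, it suffices to read it off from the (multi)deck. Write $a=|T_1|\le b=|T_2|$, so $a+b=n$; note that $n$ is recovered at once as one more than the size of any card.

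For the multideck I would first establish a Kelly-type counting lemma: for any fixed tree $F$ with $|F|\le n-1$, a fixed subset $S\subseteq\leaf(T)$ with $T[S]=F$ is contained in exactly $n-|F|$ of the $(n-1)$-subsets, so summing the number of copies of $F$ over all cards of $\Dm(T)$ (with multiplicity) equals $(n-|F|)\,N(F,T)$, where $N(F,T)$ counts such subsets. As $n-|F|>0$, this recovers every count $N(F,T)$ for $|F|\le n-1$. The structural payoff is that deleting a leaf $\ell$ of $T_i$ (when $|T_i|\ge 2$) produces a card whose root split is $(T_1-\ell)\oplus T_2$ or $T_1\oplus(T_2-\ell)$, so each card's multiset of maximal-pending-subtree sizes is either $\{a-1,b\}$ or $\{a,b-1\}$. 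Comparing the largest size that occurs across all cards lets me read off $a$ and $b$. When $a<b$ the two size pairs are distinct, the cards with size pair $\{a-1,b\}$ are exactly the $T_1$-deletion cards, and the size-$b$ pending subtree of any such card is precisely $T_2$, so the larger subtree is recovered.

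It then remains to recover $T_1$, and here I would split on how balanced the tree is. If $a<b-1$, every $T_2$-deletion card has pending sizes $\{a,b-1\}$ with $a<b-1$, so its smaller pending subtree is $T_1$ and we are done. If $b=a+1$, the $T_2$-deletion cards have equal-size pending pair $\{a,a\}$ and cannot be split by inspection; here I use multiplicities, since knowing $T_2$ yields $\Dm(T_2)$, and subtracting this known multiset from the multiset of maximal pending subtrees contributed by the $T_2$-deletion cards isolates $T_1$ (appearing with multiplicity $b$). If $a=b$, all cards share the size pair $\{a-1,a\}$ and the multiset of their larger (size-$a$) pending subtrees equals $a$ copies of $T_1$ together with $a$ copies of $T_2$, recovering $\{T_1,T_2\}$ simultaneously. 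The base cases $a=1$ (where $T=C_1\oplus T_2$) and small $n$ are checked directly, together with the observation that the caterpillar $C_n$ is the unique tree whose entire multideck consists of copies of $C_{n-1}$. Genuinely non-reconstructible pieces, such as the two size-$4$ trees $C_4$ and $C_2\oplus C_2$ that share the multideck $\{4\times C_3\}$, are disambiguated because the global multideck also records how such a $T_1$ interacts with the cards of $T_2$.

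The genuinely harder statement, and the step I expect to be the main obstacle, is reconstruction from the plain deck $\D(T)$. Without multiplicities the counting lemma fails and the subtraction step above is unavailable; indeed reconstruction from the deck is \emph{false} at $n=5$, since $C_2\oplus C_3$ and $C_1\oplus(C_2\oplus C_2)$ are distinct yet share the deck $\{C_4,\ C_2\oplus C_2\}$, which is exactly why the threshold rises to $n\ge 6$. For $n\ge 6$ I would argue structurally that the \emph{set} of distinct cards still pins down $\{T_1,T_2\}$: the mere existence of a card witnessing each pending-subtree size pair suffices to recover $T_2$, and in the unbalanced regime to recover $T_1$ as a smaller pending subtree, while the near-balanced regime must be treated by a careful direct analysis showing that the multiplicities lost from the deck are no longer needed once the pieces are large enough. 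Verifying that each such near-balanced deck determines its tree, and dispatching the finitely many remaining small cases by hand, is where I anticipate the bulk of the work.
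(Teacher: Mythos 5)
This paper does not actually prove the statement: Theorem~\ref{thm:tree_reconstruction} is quoted from \cite{Clifton2025}, so there is no internal proof to compare yours against, and your attempt has to be judged on its own. Its core regime ($a=|T_1|\ge 2$) is sound --- the Kelly-type counting lemma is correct (though you never really use it), and the three subcases $a<b-1$, $b=a+1$, and $a=b$ are each handled by a valid multideck argument. The genuine gap is the case $a=1$, i.e.\ $T=C_1\oplus T_2$, which is precisely the class of strippable (type 0 and type 1) trees that this paper builds separate machinery for. Your rule for reading off $a$ and $b$ --- take the largest pending-subtree size occurring on any card --- fails for every such tree: deleting the $C_1$-leaf yields the card $T_2$ itself, whose pending subtrees have size at most $n-2$, while every other card has size pair $\{1,n-2\}$, so the maximum over all cards is $n-2$ and your rule returns $b=n-2$, $a=2$ instead of $b=n-1$, $a=1$. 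Moreover $a=1$ is not a ``base case checked directly'': it is an infinite family for every $n$, and when $T_2$ is itself strippable the card $T_2$ is hidden among the $n-1$ cards $C_1\oplus(T_2-\ell)$ sharing the same size pair, so isolating it needs a real argument (for instance, detecting strippability by counting strippable cards, then recovering the stripping $(i,Q)$ from the cards of minimal stripping index). This is repairable, but as written the step is missing, not merely terse.

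The second gap is that the deck half of the theorem ($n\ge 6$) is not proven at all. Your size-$5$ counterexample $\D(C_2\oplus C_3)=\D\bigl(C_1\oplus(C_2\oplus C_2)\bigr)=\{C_4,\;C_2\oplus C_2\}$ is correct and rightly explains why the threshold rises to $6$, but for $n\ge 6$ your argument leans on multiplicities exactly in the near-balanced subcases ($b=a+1$ and $a=b$), and you explicitly defer ``the bulk of the work'' of replacing them to an unexecuted case analysis. So roughly half of the claimed statement is a plan rather than a proof.
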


We are now in the position to define the central concept of this paper.
\begin{definition}\label{def:tanglegram}
    A \emph{tanglegram} $\T=(L,R,\sigma)$ consists of two rooted binary trees, the \emph{left tree} $L$ and the \emph{right tree} $R$, along with a perfect matching $\sigma$ between their leaves. We will denote elements of $\sigma$ using edges $vu$, where $v\in \mathbb{L}(L)$ and $u\in \mathbb{L}(R)$. For an edge $vu \in \sigma$, we also say that $v$ and $u$ are \emph{matched}. 
The \emph{size} $\size{\T}$ of a tanglegram $\T$ is the common size of its underlying trees, $|\sigma|$. 
\end{definition}

An isomorphism between two tanglegrams is a graph isomorphism that maps the root of the left (right) tree to the root of the right (left) tree. 

\begin{definition}
A \emph{layout of a tanglegram $\T=(L,R,\sigma)$}  is a straight line drawing such that for some $a<b$ we have a standard drawing of $L$ with a vertical leaf-line $x=a$  to the left of the leaf-line, a standard drawing of $R$ with a vertical leaf-line $x=b$ to the right of the leaf-line, and the edges in $\sigma$ are drawn as straight lines.   
\end{definition}

Tanglegrams can have several layouts, as shown in Figure~\ref{fig:tanglegramexamples}.

\begin{figure}[htbp]
\scalebox{0.6}{
\begin{tikzpicture}
\node[fill=black,rectangle,inner sep=2pt]  at (-1,2) {}; 
\node[fill=black,rectangle,inner sep=2pt]  at (-1,1) {}; 
\node[fill=black,rectangle,inner sep=2pt]  at (-1,0) {}; 
\node[fill=black,rectangle,inner sep=2pt]  at (-1,-1) {}; 
\node[fill=black,rectangle,inner sep=2pt]  at (-1,-2) {}; 
\node[fill=black,rectangle,inner sep=2pt]  at (1,2) {}; 
\node[fill=black,rectangle,inner sep=2pt]  at (1,1) {}; 
\node[fill=black,rectangle,inner sep=2pt]  at (1,0) {}; 
\node[fill=black,rectangle,inner sep=2pt]  at (1,-1) {}; 
\node[fill=black,rectangle,inner sep=2pt]  at (1,-2) {}; 
\draw[dashed] (-1,2) -- (1,-1);
\draw[dashed] (-1,1) -- (1,1);
\draw[dashed] (-1,0) -- (1,-2);
\draw[dashed] (-1,-1) -- (1,2);
\draw[dashed] (-1,-2) -- (1,0);
\draw (-1,2) -- (-1.5,1.5) -- (-1,1) -- (-1.5,1.5) -- (-2,1) -- (-1,0) -- (-2,1) -- (-3,0) -- (-1,-2) -- (-1.5,-1.5) -- (-1,-1);
\draw (1,2) -- (2,1) --(1.5,0.5) -- (1,1) -- (1.5,0.5) -- (1,0) -- (1.5,0.5) -- (2,1) -- (2.5,.5) -- (1,-1) -- (2.5,.5)-- (3,0) -- (1,-2);
\node[fill=black,circle,inner sep=1pt]  at (-1.5,1.5) {};
\node[fill=black,circle,inner sep=1pt]  at (-2,1) {};
\node[fill=black,circle,inner sep=1pt]  at (-1.5,-1.5) {};
\node[fill=black,circle,inner sep=1pt]  at (-3,0) {};
\node[fill=black,circle,inner sep=1pt]  at (2,1) {};
\node[fill=black,circle,inner sep=1pt]  at (1.5,0.5) {};
\node[fill=black,circle,inner sep=1pt]  at (2.5,0.5) {};
\node[fill=black,circle,inner sep=1pt]  at (3,0) {};
\end{tikzpicture} \qquad
\begin{tikzpicture}
\node[fill=black,rectangle,inner sep=2pt]  at (-1,2) {}; 
\node[fill=black,rectangle,inner sep=2pt]  at (-1,1) {}; 
\node[fill=black,rectangle,inner sep=2pt]  at (-1,0) {}; 
\node[fill=black,rectangle,inner sep=2pt]  at (-1,-1) {}; 
\node[fill=black,rectangle,inner sep=2pt]  at (-1,-2) {}; 
\node[fill=black,rectangle,inner sep=2pt]  at (1,2) {}; 
\node[fill=black,rectangle,inner sep=2pt]  at (1,1) {}; 
\node[fill=black,rectangle,inner sep=2pt]  at (1,0) {}; 
\node[fill=black,rectangle,inner sep=2pt]  at (1,-1) {}; 
\node[fill=black,rectangle,inner sep=2pt]  at (1,-2) {};
\draw[dashed] (-1,2) -- (1,2);
\draw[dashed] (-1,1) -- (1,1);
\draw[dashed] (-1,0) -- (1,0);
\draw[dashed] (-1,-1) -- (1,-1);
\draw[dashed] (-1,-2) -- (1,-2);
\draw (-1,0) -- (-1.5,0.5) -- (-1,1) -- (-1.5,0.5) -- (-2,1) -- (-1,2) -- (-2,1) -- (-3,0) -- (-1,-2) -- (-1.5,-1.5) -- (-1,-1);
\draw (1,1) -- (2.5,-0.5) -- (2,-1) -- (1,0) -- (1.5,-0.5) -- (1,-1) -- (1.5,-0.5) -- (2,-1) -- (1,-2) -- (3,0) -- (1,2);
\node[fill=black,circle,inner sep=1pt]  at (-1.5,0.5) {};
\node[fill=black,circle,inner sep=1pt]  at (-2,1) {};
\node[fill=black,circle,inner sep=1pt]  at (-1.5,-1.5) {};
\node[fill=black,circle,inner sep=1pt]  at (-3,0) {};
\node[fill=black,circle,inner sep=1pt]  at (2.5,-0.5) {};
\node[fill=black,circle,inner sep=1pt]  at (2,-1) {};
\node[fill=black,circle,inner sep=1pt]  at (1.5,-0.5) {};
\node[fill=black,circle,inner sep=1pt]  at (3,0) {};
\end{tikzpicture}
}
\caption{Two layouts of the same tanglegram.}
\label{fig:tanglegramexamples}
\end{figure}

We next define induced subtanglegrams of a tanglegram $\T=(L,R,\sigma)$, as well as the (multi)deck of a tanglegram.

\begin{definition}\label{def:induced-subtanglegram}
Let $\T=(L,R,\sigma)$ be a tanglegram. For any  $E\subseteq \sigma$, let $S_L\subseteq \mathbb{L}(L)$ and 
$S_R\subseteq \mathbb{L}(R)$ be the set of leaves matched by $E$. The \emph{subtanglegram} $\T[E]$ \emph{induced} by $E$ is defined as the tanglegram formed from the induced subtrees $L[S_L]$ and $R[S_R]$ with the matching given by $E$. 
\end{definition}

When $E=\sigma-\{vu\}$ for some $vu\in\sigma$, we abuse notation and also denote $\T[\sigma-\{vu\}]$ as $\T-vu$. If the focus is on a particular leaf removed from $L$ (respectively $R$), then we abuse notation and also denote this as as $\T-v$ (respectively $\T-u$).

\begin{definition}\label{def:deck-tanglegrams} 
The \emph{deck} $\D(\T)$ of the tanglegram $\T$ is the set of size $\size{\T}-1$ tanglegrams that are induced subtanglegrams of $\T$. 
The \emph{multideck} $\D^{m}(\T)$ of the tanglegram $\T$ is a multiset whose elements are the elements of $\D(\T)$. To 
determine these multiplicities, label the edges of $\sigma$ arbitrarily, and denote the labeled set by $\sigma^*$. 
The multiplicity of an element $\T'\in\D(\T)$ in $\Dm(\T)$ is 
$\vert\{E\subseteq\sigma^*: \T'=\T[E]\}\vert$.
\end{definition}
As for trees, we often refer to the elements of $\D^m(\T)$ (respectively $\D(\T)$) as \emph{cards}. 
An example is shown in Fig.~\ref{fig:tanglegramdeck}.

\begin{figure}[htbp]
    \centering
    \scalebox{0.6}{
    \begin{tikzpicture}
\node[fill=black,rectangle,inner sep=2pt]  at (-1,2) {}; 
\node[fill=black,rectangle,inner sep=2pt]  at (-1,1) {}; 
\node[fill=black,rectangle,inner sep=2pt]  at (-1,0) {}; 
\node[fill=black,rectangle,inner sep=2pt]  at (-1,-1) {}; 
\node[fill=black,rectangle,inner sep=2pt]  at (-1,-2) {}; 
\node[fill=black,rectangle,inner sep=2pt]  at (1,2) {}; 
\node[fill=black,rectangle,inner sep=2pt]  at (1,1) {}; 
\node[fill=black,rectangle,inner sep=2pt]  at (1,0) {}; 
\node[fill=black,rectangle,inner sep=2pt]  at (1,-1) {}; 
\node[fill=black,rectangle,inner sep=2pt]  at (1,-2) {};
\draw[dashed] (-1,2) -- (1,2);
\draw[dashed] (-1,1) -- (1,1);
\draw[dashed] (-1,0) -- (1,0);
\draw[dashed] (-1,-1) -- (1,-1);
\draw[dashed] (-1,-2) -- (1,-2);
\node at (0,2.2) {$e_1$};
\node at (0,1.2) {$e_2$};
\node at (0,.2) {$e_3$};
\node at (0,-.8) {$e_4$};
\node at (0,-1.8) {$e_5$};
\draw (-1,0) -- (-1.5,0.5) -- (-1,1) -- (-1.5,0.5) -- (-2,1) -- (-1,2) -- (-2,1) -- (-3,0) -- (-1,-2) -- (-1.5,-1.5) -- (-1,-1);
\draw (1,1) -- (2.5,-0.5) -- (2,-1) -- (1,0) -- (1.5,-0.5) -- (1,-1) -- (1.5,-0.5) -- (2,-1) -- (1,-2) -- (3,0) -- (1,2);
\node at (0,-2.5) {\huge{$\mathcal{T}$}};
\end{tikzpicture}}\\
\scalebox{0.6}{
\begin{tikzpicture}
    \node at (0,4) {\phantom{-}};
    \node at (0,0) {\phantom{-}};
    \draw (-1,0) -- (-1.5,0.5) -- (-1,1) -- (-1.5,0.5) -- (-2.5,1.5) -- (-1,3) -- (-1.5,2.5) -- (-1,2);
    \draw (1,0) -- (2,1) -- (1,2) -- (1.5,1.5) -- (1,1) -- (1.5,1.5) -- (2,1) -- (2.5,1.5) -- (1,3);
    \node[fill=black,rectangle,inner sep=2pt]  at (-1,2) {}; 
    \node[fill=black,rectangle,inner sep=2pt]  at (-1,1) {}; 
    \node[fill=black,rectangle,inner sep=2pt]  at (-1,0) {}; 
    \node[fill=black,rectangle,inner sep=2pt]  at (-1,3) {}; 
    \node[fill=black,rectangle,inner sep=2pt]  at (1,3) {}; 
    \node[fill=black,rectangle,inner sep=2pt]  at (1,2) {}; 
    \node[fill=black,rectangle,inner sep=2pt]  at (1,1) {}; 
    \node[fill=black,rectangle,inner sep=2pt]  at (1,0) {}; 
    \draw[dashed] (-1,0) -- (1,0);
    \draw[dashed] (-1,1) -- (1,1);
    \draw[dashed] (-1,2) -- (1,2);
    \draw[dashed] (-1,3) -- (1,3);
    \node at (0,-0.75) {\huge{$\mathcal{T}_1=\T-e_1=\T-e_2$}};
\end{tikzpicture}
\qquad 
\begin{tikzpicture}
    \node at (0,4) {\phantom{-}};
    \node at (0,0) {\phantom{-}};
    \draw (-1,0) -- (-1.5,0.5) -- (-1,1) -- (-1.5,0.5) -- (-2.5,1.5) -- (-1,3) -- (-1.5,2.5) -- (-1,2);
    \draw (1,0) -- (1.5,0.5) -- (1,1) -- (1.5,0.5) -- (2,1) -- (1,2) -- (2,1) -- (2.5,1.5) -- (1,3);
    \node[fill=black,rectangle,inner sep=2pt]  at (-1,2) {}; 
    \node[fill=black,rectangle,inner sep=2pt]  at (-1,1) {}; 
    \node[fill=black,rectangle,inner sep=2pt]  at (-1,0) {}; 
    \node[fill=black,rectangle,inner sep=2pt]  at (-1,3) {}; 
    \node[fill=black,rectangle,inner sep=2pt]  at (1,3) {}; 
    \node[fill=black,rectangle,inner sep=2pt]  at (1,2) {}; 
    \node[fill=black,rectangle,inner sep=2pt]  at (1,1) {}; 
    \node[fill=black,rectangle,inner sep=2pt]  at (1,0) {}; 
    \draw[dashed] (-1,0) -- (1,0);
    \draw[dashed] (-1,1) -- (1,1);
    \draw[dashed] (-1,2) -- (1,2);
    \draw[dashed] (-1,3) -- (1,3);
    \node at (0,-0.75) {\huge{$\mathcal{T}_2=\T-e_3$}};
\end{tikzpicture}
\qquad 
\begin{tikzpicture}
    \node at (0,4) {\phantom{-}};
    \node at (0,0) {\phantom{-}};
    \draw (-1,0) -- (-2.5,1.5) -- (-2,2) -- (-1,1) -- (-1.5,1.5) -- (-1,2) -- (-1.5,1.5) -- (-2,2) -- (-1,3);
    \draw (1,0) -- (1.5,0.5) -- (1,1) -- (1.5,0.5) -- (2,1) -- (1,2) -- (2,1) -- (2.5,1.5) -- (1,3);
    \node[fill=black,rectangle,inner sep=2pt]  at (-1,2) {}; 
    \node[fill=black,rectangle,inner sep=2pt]  at (-1,1) {}; 
    \node[fill=black,rectangle,inner sep=2pt]  at (-1,0) {}; 
    \node[fill=black,rectangle,inner sep=2pt]  at (-1,3) {}; 
    \node[fill=black,rectangle,inner sep=2pt]  at (1,3) {}; 
    \node[fill=black,rectangle,inner sep=2pt]  at (1,2) {}; 
    \node[fill=black,rectangle,inner sep=2pt]  at (1,1) {}; 
    \node[fill=black,rectangle,inner sep=2pt]  at (1,0) {}; 
    \draw[dashed] (-1,0) -- (1,0);
    \draw[dashed] (-1,1) -- (1,1);
    \draw[dashed] (-1,2) -- (1,2);
    \draw[dashed] (-1,3) -- (1,3);
    \node at (0,-0.75) {\huge{$\mathcal{T}_3=\T-e_4=\T-e_5$}};
\end{tikzpicture}
}
    \caption{The deck of $\mathcal{T}$ is $\{\mathcal{T}_1,\mathcal{T}_2,\mathcal{T}_3\}$. The multideck of $\T$ contains two copies of $\T_1$, one copy of $\T_2$, and two copies of $\T_3$.}
    \label{fig:tanglegramdeck}
\end{figure}

\begin{definition}\label{def:tanglereconstructable}
A 
tanglegran $\T$ is \emph{reconstructable} from its deck (resp. multideck) if 
for any tanglegram $\T'$, $\D(\T')=\D(\T)$ (resp. $\Dm(\T')=\Dm(\T)$) implies $\T=\T'$.
\end{definition}

We are interested in determining whether all tanglegrams or all tanglegrams of a certain class can be reconstructed from their deck. This brings up the following problem: Say, we have a tanglegram class $\mathcal{H}$ (e.g. $\mathcal{H}$ is the class of tanglegrams with at least one of the left or right tree being a caterpillar) and we know that all tanglegrams in $\mathcal{H}$ have unique decks, but there are some decks that also correspond to tanglegrams not in $\mathcal{H}$. In this case, given a deck of a tanglegram, we still have no information on whether we can reconstruct the tanglegram. Because of this, we require in our definition that we can determine from the deck (or multideck) whether the tanglegram it belongs to must be in $\mathcal{H}$ or not.

\begin{definition}\label{tangleclassreconstructable}
Let $n$ be a positive integer, and
$\D$  be a set (resp. multiset) of size $n-1$ tanglegrams.
We denote by $\mathbb{T}[\D]$ 
the set of size $n$ tanglegrams with deck (resp. multideck) $\D$. For tanglegram class $\mathcal{H}$ and positive integer $n$, we say that $\mathcal{H}$ is \emph{deck-decidable} (resp. \emph{multideck-decidable}) for size $n$ if for every set $\D$ (resp. multiset) of size $n-1$ tanglegrams with $\mathbb{T}(\mathcal{D}) \neq \emptyset$, either $\mathbb{T}[\D)]\subseteq\mathcal{H}$ or $\mathbb{T}[\D]\cap\mathcal{H}=\emptyset$, and we can decide from $\D$ which is the case.
\end{definition}

\begin{definition}
    Let $\mathcal{H}$ be a tanglegram class and $n$ be a positive integer.
    The tanglegrams of $\mathcal{H}$ of size $n$ are \emph{reconstructable} from their deck (resp. multideck) if $\mathcal{H}$ is deck-decidable (resp. multideck-decidable) for size $n$ and for every $\T \in \mathcal{H}$ of size $n$, $\T$ is reconstructable from its deck (resp. multideck). 
\end{definition}

Note that $\D(\T)$ (respectively $\Dm(\T)$) includes all elements of $\D(L)$ and $\D(R)$ (respectively $\Dm(L)$ and $\Dm(R)$). The following lemma is therefore immediate from Theorem~\ref{thm:tree_reconstruction}. 
\begin{lemma}\label{lem:reduce}
Let $\T=(L,R,\sigma)$ be a tanglegram. If $\size{\T}\ge 6$, then $L$ and $R$ can be reconstructed from $\D(\T)$, and if $\size{\T}=5$, then $L$ and $R$ can be reconstructed from $\Dm(\T)$. Consequently, the tanglegram class containing all tanglegrams
where at least one of the left or right tanglegram is a caterpillar is deck-decidable for size at least $6$.
\end{lemma}

Thus, our algorithmic approach reduces the tanglegram reconstruction problem to reconstructing the set $\sigma$ of matching edges from $\Dm(\T)$.

\section{Types of trees}\label{sec:types}
In this section, we partition the set of rooted binary trees into three types. This partition will be used in later sections to prove our main theorem.

\begin{definition}
Let $i\in\mathbb{N}$ and $T_1,T_2$ be rooted binary trees. We define the operation $(T_1\oplus)^i  T_2$  as follows:
$(T_1\oplus)^0T_2=T_2$ and for $i>0$, $(T_1\oplus)^i(T_2)=T_1\oplus((T_1\oplus)^{i-1}T_2)$.
\end{definition}

\begin{definition} Let $T$ be a rooted binary tree. Then $T$ is \emph{strippable} if $T=C_1$  or $T=C_1\oplus T'$ for some tree $T'$. Otherwise, $T$ is \emph{non-strippable}.
The \emph{stripping} of $T$ is defined as follows: If $T=C_n$, its stripping is $(n,\emptyset)$. Otherwise,
the stripping of $T$ is $(i,Q)$, where $Q$ is a non-strippable tree, $i$ is a nonnegative integer, and $T=(C_1\oplus)^iQ$. If $T=C_n$, all of its leaves are \emph{strippable}, otherwise, the leaves not in $Q$ are \emph{strippable} in $T$. Note that $i=0$ indicates that $T=T'$, and that $T$ is non-strippable (see, e.g., Fig.~\ref{fig:size5trees}).
\end{definition}

Note that if $(i,Q)$ is a stripping of a tree $T$, then $\size{T}=i+\size{Q}$ (where we use $\size{\emptyset}=0$). Moreover, if $i\ge 1$, then one of the following holds:  $i=n$ and $T=C_n$; or $\size{Q}\ge 4$.
When $T$ is strippable, we will often label the strippable leaves according to their distance from the root (the only time this is ambiguous is when $T=C_n$).

\begin{definition}\label{def:dtr}
    Let $T$ be a strippable tree with stripping $(i,Q)$. 
    A labeling of the strippable leaves with subscripted symbols $\{v_1,\ldots,v_i\}$ is a \emph{distance-to-root labeling} if for all $k:1\le k\le \min(i,n-2)$,
    $v_k$ is distance $k$ from $r_T$ for $1\leq k\leq i$. When $i>n-2$, then $T=C_n$, and  the \emph{distance-to-root labeling} gives the remaining two vertices the labels $v_n$ and $v_{n-1}$ (note that these two vertices form a cherry).
\end{definition} 

Note that when $T=C_n$, the  labels $v_{n-1}$ and $v_n$ may be used in two ways (interchanging the labels between the two leaves involved) in a distance-to-root labeling. Throughout, we will often identify a leaf with its label. For example, if we say that $v_i$ is matched to some leaf $\ell$, we mean that the leaf labeled by $v_i$ is matched to $\ell$. Furthermore, we often simply refer to the label set $\{v_1, \ldots, v_i\}$ as the distance-to-root labeling. 

\begin{definition}
    Let $T$ be a rooted binary tree. We say $T$ is \emph{type 0} if it is a caterpillar, \emph{type 1} if it is strippable and not a caterpillar, and \emph{type 2} otherwise. Moreover, a type 2 tree is \emph{type 2a} if it is of the form $Q\oplus Q$ for some rooted binary tree $Q$, \emph{type $2b$} if it is of the form $Q\oplus Q'$ for some rooted binary tree $Q$ and $Q'\in\D(Q)$, and \emph{type 2c} otherwise.
\end{definition}

Notice that if $T$ is type  1 with stripping $(i,Q)$, then $\size{Q} \geq 4$ and $\size{T}=\size{Q}+i\ge 5$. If $T$ is type 2, then $T = Q_1 \oplus Q_2$ for some rooted binary trees $Q_1, Q_2$ with $\size{Q_1}, \size{Q_2} \geq 2$.

In Fig.~\ref{fig:size5trees}, we depict the three rooted binary trees of size 5. They are of types 0, 1, and 2b, respectively.

\begin{figure}[htbp]
    \centering
    \scalebox{0.75}{
    \begin{tikzpicture}
        \pic[xscale=-1] at (4,0) {tree5.1}; 
        \node at (2,-1) {$C_5$, stripping $(5,\emptyset)$.};
        \pic[xscale=-1] at (9,0) {tree5.2}; 
        \node at (7,-1) {Tree with stripping $(1,C_2\oplus C_2)$.};
       \pic[xscale=-1] at (14,0) {tree5.3}; 
        \node at (12,-1) {$T$, stripping $(0,T)$.};      
    \end{tikzpicture}}
    \caption{The three trees of size $5$, respectively of types 0, 1, and 2b.}
    \label{fig:size5trees}
\end{figure}

The following statements are obvious and are implicit in \cite{Clifton2025}:
\begin{lemma}\label{lm:type1removal}
 Let $T$ be a tree of type 1 with stripping $(i,Q)$, Then we have:
\begin{enumerate}[label={\upshape (\roman*)}]
  \item If $v$ is a strippable leaf, then $T-v$ has stripping $(i-1,Q)$.
  \item If $v$ is a leaf of $Q$, then $T-v$ has stripping $(i+j,Q')$, where $(j,Q')$ is the stripping of $Q-v$.
 \end{enumerate}
 In particular, cards in $\D(T)$ have at least $i-1$ strippable leaves, and at least one card in $\D(T)$ has exactly $i-1$ strippable leaves; consequently, $i$ can be determined from $\D(T)$. 
 \\
 Let $T'\in \D(T)$ with stripping $(j,Q')$. Then we have one of the following:
\begin{enumerate}[label={\upshape (\alph*)}]
     \item $j=i-1$ and $Q'=Q$;
     \item $j=i$, $Q'\in\D(Q)$, and $T'=T-v$ for some leaf $v$ in a pending subtree of $Q$ that is of size at least $3$;
     \item $j>i$, $Q=C_2\oplus (C_1\oplus )^{j-i-1}Q'$, and $v$ is a leaf of the pending subtree $C_2$ of $Q$.
 \end{enumerate}
 In particular, we can determine which leaves of $T'$ correspond to leaves of $Q$ and which leaves correspond to strippable leaves of $T$. Moreover, we
 can determine which leaves of $T$ correspond to which pending subtree of $Q$, unless $Q$ is of type 2a or $Q= Q_1 \oplus Q_2$ is of type 2b and $T'=C_1\oplus(Q_j\oplus Q_j)$ where $\size{Q_j}>\size{Q_{3-j}}$.
 \end{lemma}

 \begin{lemma}\label{lm:type2removal}
Let $T=T_1\oplus T_2$ be a tree of type 2, let $v$ be any leaf of $T$, and let $T'=T-v$. 
If $T$ is not type 2a, we can determine a unique $j \in \{1,2\}$ such that $v \in T_j$ and $T' = (T_j - v) \oplus T_{3-j}$. In particular, we can determine whether $v \in T_1$ or $v \in T_2$.  
If, in addition, $T'$ is not type 2a,  we can identify which pending subtree of $T'$ corresponds to $T_{3-j}$.
\end{lemma}

\section{Tanglegrams of small size}
\label{sec:small}

\begin{figure}[htbp]
\centering
\scalebox{0.7}{
\begin{tikzpicture}
\pic at (15,0){example_5_1}; 
\end{tikzpicture} \qquad \qquad
\begin{tikzpicture}
\pic at (25,0){example_5_2}; 
\end{tikzpicture}
}
\\
\vspace{5mm}
\scalebox{0.7}{
\begin{tikzpicture}
\pic at (-3,0) {example_5_card_1}; 
\end{tikzpicture} \qquad
\begin{tikzpicture}
\pic at (10,0) {example_5_card_2}; 
\end{tikzpicture}
}
\caption{Two size $5$ catergrams that have the same multideck, and the cards in their deck. A distance-to-root labeling of the left and right trees of size $5$ catergrams are given as well as all representations of the cards in the deck.}
\label{fig:example_5}
\end{figure}

Consider size $n$ rooted binary trees: When $n\le 3$, the only rooted binary tree is the caterpillar $C_n$, for $n=4$, we have two trees, $C_4$ and $C_2\oplus C_2$, and for $n=5$, we have three trees, $C_5$, $C_1\oplus(C_2\oplus C_2)$, and $C_2\oplus C_3$. The two binary trees of size $4$ have the same multidecks: $4$ copies of $C_3$. 

Following the terminology of \cite{antichain22}, we refer to tanglegrams of the form $(C_n,C_n,\sigma)$, where the left and right trees are both caterpillars, as \emph{catergrams}. Clearly, the deck of a catergram only contains catergrams (and for $n\le 3$, every size $n$ tanglegram is a catergram). If we denote the number of size $n$ catergrams by $c_n$, then $c_1=1$, and, as it is implicit in \cite{antichain22}, for $n\ge 2$, we have $c_n=(n-2)!+\frac{n!-2(n-2)!}{4}=(n-2)!\frac{n^2-n+2}{4}$. In particular, $c_2=1$, $c_3=2$, and $c_4=7$. 
Moreover, every catergram of size $n$ (and for $n=4$ every tanglegram) has a deck that contains only catergrams of size $n-1$. Thus, the multideck of a size $n$ catergram can be described by the multiplicities of the size $(n-1)$ catergrams in it (including $0$). Therefore, there are at most $\binom{n+c_n-1}{c_n-1}$ possible decks for size $n$ catergrams. There are $17$ size $4$ tanglegrams, $7$ of these are catergrams, and there are at most $\binom{4+c_3-1}{c_3-1}=5$ multidecks for size $4$ tanglegrams. Consequently, even size $4$ catergrams
are not reconstructable from their multideck.

Thus, the best we can hope for is reconstructing tanglegrams of size at least $5$, and by Lemma~\ref{lem:reduce} we can reconstruct their left and right trees from their multideck. However, we find that even catergrams of size $5$ are not reconstructable from their multidecks, as the example in Figure~\ref{fig:example_5} shows. Therefore the lower bound $6$ in Theorem~\ref{mainthm1} is best possible.

\section{Reconstruction of catergrams} \label{Sec:CatergramReconstruction}
Throughout the remaining three sections, we freely use the fact that if $\T = (L, R, \sigma)$ is a tanglegram of size $n \geq 6$, the deck $\D(\T)$ uniquely determines the trees $L$ and $R$. Consequently, as long as $\size{\T}\ge 6$, we may freely assume that we know $L$ and $R$, and only need to determine $\sigma$ from $\Dm(\T)$ (see Lemma~\ref{lem:reduce}).
Also, without loss of generality, we will assume that $L=C_n$ and use the distance-to-root labeling $\{v_1,\ldots,v_n\}$. Moreover, for cards $(C_{n-1},R',\sigma')$ we use subscripted symbols $\{v_1',\ldots,v_{n-1}'\}$ for the distance-to-root labeling of the left subtree. \medskip

In this section, we prove reconstruction for catergrams of size at least $6$.
In this case, we use the distance-to-root labeling  $\{u_1,\ldots,u_n\}$ for the right subtree $R=C_n$, and for cards $T'=(C_{n-1},C_{n-1},\sigma')$, corresponding subscripted symbols $\{u_1',\ldots,u_{n-1}'\}$ for the distance-to-root labeling of the right subtree.

We first show that the multideck determines whether $v_1$ is matched to the cherry of $R$ or not.
\begin{lemma}\label{lm:matchcherry}
      Let $\mathcal{T} = (C_n,C_n,\sigma)$ be a catergram of size $n \ge 6$.
      We can determine from $\Dm(\T)$ whether $v_1$ is matched to one of $u_n,u_{n-1}$ or not.
\end{lemma}
\begin{proof}
 If $v_1$ is adjacent to one of $u_n,u_{n-1}$, we may assume without loss of generality that the distance-to-root labeling is such that $v_1$ is adjacent to $u_n$. Observe that if $v_1$ is matched to $u_{n}$, then at least $n-1\ge 5$ cards in $\Dm(\mathcal{T})$ have $v_1'$ matched to a leaf in the cherry of $R'$. 
 If $v_1$ is matched to $u_{n-2}$, then precisely two or three cards (corresponding to the removal of $u_{n-1},u_{n}$, and possibly $v_1$) have $v_1'$ matched to the cherry of $R'$. If $v_1$ is matched to some $u_i$ with $i\leq n-3$, then at most one card (namely $\T-v_1$) has $v_1'$ matched to the cherry of $R'$. Hence, by counting cards in which $v_1'$ is matched to the cherry of $R'$, we can determine from $\mathcal{D}^{m}(\mathcal{T})$ if $v_1$ is matched to one of $u_{n},u_{n-1}$ or not.    
\end{proof}

We now handle the case in which $v_1$ is matched to the cherry of $R$.

\begin{lemma}\label{lem:catcathelper}
    Let $\mathcal{T} = (C_n,C_n,\sigma)$ be a catergram of size $n \ge 6$.
    If $\sigma$ matches $v_1$ with a leaf of the cherry of the right tree, then $\mathcal{T}$ is reconstructable from the multideck $\mathcal{D}^{m}(\mathcal{T})$. 
\end{lemma}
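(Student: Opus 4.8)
The plan is to reduce the statement to the reconstruction of the matching alone, and then to recover that matching from one distinguished card together with consistency checks against the rest of the deck. By Corollary~\ref{cor:trees-in-tangelgram-reconstructable}, any tanglegram $\mathcal{T}_1$ with $\mathcal{D}^m(\mathcal{T}_1)=\mathcal{D}^m(\mathcal{T})$ has both trees equal to $C_n$, so it suffices to show that $\mathcal{D}^m(\mathcal{T})$ determines $\sigma$ up to the automorphisms of the two caterpillars. I would encode $\sigma$ by the permutation $\pi$ with $v_i\leftrightarrow u_{\pi(i)}$, using the distance-to-root labelings $\{v_1,\dots,v_n\}$ and $\{u_1,\dots,u_n\}$. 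Since the only nontrivial automorphism of $C_n$ swaps the cherry leaves $v_{n-1},v_n$ (respectively $u_{n-1},u_n$), the tanglegram is exactly $\pi$ modulo swapping the last two domain entries and, independently, the last two values. Using the cherry symmetry of the right tree, I would normalize the hypothesis ``$v_1$ matched to the cherry of $R$'' to the clean condition $\pi(1)=n$.

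The key structural step is that deleting the matched pair $(v_i,u_{\pi(i)})$ and passing to distance-to-root labelings yields precisely the one-point deletion of $\pi$: the resulting card is the size-$(n-1)$ caterpillar--caterpillar tanglegram whose matching is the standardization of $(\pi(1),\dots,\widehat{\pi(i)},\dots,\pi(n))$. I would verify this by checking that the monotone reindexing of leaves by distance to the root survives all four cases (deleting a cherry or non-cherry leaf on the left, against a cherry or non-cherry partner on the right); in each case removing a leaf of $C_n$ returns $C_{n-1}$ with the surviving labels still in their original order. Thus $\mathcal{D}^m(\mathcal{T})$ is exactly the multiset of one-point deletions of $\pi$, except that each card is known only up to its \emph{own} cherry symmetry. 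I would then single out the card $c^\ast=\mathcal{T}-v_1u_n$, whose matching is $(\pi(2),\dots,\pi(n))$ viewed as a permutation of $\{1,\dots,n-1\}$ and whose right cherry has ``moved up'' to $\{u_{n-2},u_{n-1}\}$; recovering $c^\ast$ would recover $\pi(2),\dots,\pi(n)$ and hence, with $\pi(1)=n$, all of $\pi$.

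The main obstacle is entirely about the cherry symmetry, and it has two faces. First, $c^\ast$ is known only up to swapping its last two domain entries and its last two values, but the relevant value swap identifies $u_{n-2}$ with $u_{n-1}$, which are \emph{not} interchangeable in $\mathcal{T}$; so $c^\ast$ on its own cannot separate $\pi^{-1}(n-2)$ from $\pi^{-1}(n-1)$. Second, $c^\ast$ cannot be located inside the multideck by inspecting trees, since every card has both trees $C_{n-1}$, so I need a matching-invariant that flags it. To resolve both, I would cross-reference $c^\ast$ against the two cards that delete a right-cherry pair, namely $(v_1,u_n)$ and $(v_a,u_{n-1})$ with $v_a=v_{\pi^{-1}(n-1)}$: comparing how the top of the right caterpillar behaves across these cards should pin down which value is $n-1$ versus $n-2$ and locate $v_a$, while the anchor $\pi(1)=n$ initializes the whole reconstruction. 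I expect the delicate part to be the bookkeeping that rules out a competitor permutation $\pi'$ producing the same symmetry-reduced deck as $\pi$ — that is, showing that for $n\ge5$ these cross-checks leave no residual ambiguity.
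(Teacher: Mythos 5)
Your reduction is a clean and correct reformulation: encoding $\sigma$ as a permutation $\pi$ with $\pi(1)=n$, and observing that each card is the standardization of $\pi$ with one entry deleted, known only up to the cherry symmetries of its two trees, is exactly the right way to think about the caterpillar--caterpillar case. But the proof is not complete, because everything that is actually hard has been deferred to ``cross-checks'' and ``bookkeeping'' that are never carried out, and the specific plan of singling out the card $c^\ast=\mathcal{T}-v_1u_n$ breaks down in precisely the configurations that need the most work. A card is an abstract isomorphism type with no memory of which leaves were deleted, so $c^\ast$ can only be located via a matching invariant; yet if $v_2$ is matched to $u_{n-1}$ (and more generally if $v_ku_{n-k+1}\in\sigma$ for $k=1,\dots,m$), the $m$ cards $\mathcal{T}-v_ku_{n-k+1}$ are pairwise identical as tanglegrams, so no statistic of a single card can flag $c^\ast$; and if $v_2$ is matched to $u_{n-2}$, then in $\mathcal{T}-v_1u_n$ the leaf $v_1'$ is still matched to the cherry of $R'$, so the marker you propose (the right cherry having ``moved up'') does not isolate it either. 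The paper's proof spends essentially all of its length on these configurations: it first counts cards with $v_1'$ matched to the cherry of $R'$ to certify $v_1u_n\in\sigma$ against an \emph{arbitrary} competitor (your normalization $\pi(1)=n$ quietly assumes what must be verified, since a competing tanglegram need not satisfy the hypothesis of the lemma), then splits into subcases according to the partners of $v_2$ and $v_3$ (Table~\ref{table:v1un_cases}), separates them by counting cards with both $v_1'$ and $v_2'$ matched to the cherry ($n$, $n-2$, $3$, or $1$ such cards, distinct because $n\ge 6$), and in the run subcase recovers the run length $m$ from multiplicities rather than from any single distinguished card.

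The second gap is the $u_{n-2}$-versus-$u_{n-1}$ ambiguity itself: you state that comparing $c^\ast$ with the card deleting $(v_a,u_{n-1})$ ``should'' pin down which value is which, and that you ``expect the delicate part to be the bookkeeping,'' but that bookkeeping \emph{is} the proof. Until you exhibit concrete multideck statistics that (i) produce a usable card in every configuration of the partners of $v_2$ and $v_3$, including the degenerate ones where the needed cards coincide, and (ii) rule out every competitor $\pi'$, including those with $\pi'(1)\notin\{n-1,n\}$, the argument remains a plausible plan rather than a proof. The case analysis and card-counting in the paper are one way to supply exactly these missing ingredients.
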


\begin{proof}
There are various cases and subcases based on which vertices $v_2$ and potentially $v_3$ are matched to in $\T$. These are collected in Table~\ref{table:v1un_cases}.
  
 Observe that using $\Dm(\T)$, we can distinguish the case where $v_2$ is matched to $u_j$ with $j<n-2$, as in this case, there exists a unique card, $\T-v_1u_{n}$, where $v_1'$ is not matched to the cherry of $R'$, whereas if $v_2$ is matched to one of $u_{n-2},u_{n-1}$, then $v_1'$ is matched to the cherry of $R'$ in all cards $\T'$ of $\Dm(\T)$.
 
 If $v_2$ is matched to $u_j$ for some $j< n-2$,
 the card $\T-v_1u_{n}$ allows us to reconstruct $\T$.

\begin{table}[htbp]
 \caption{Case analysis for  $v_1u_n\in \sigma$. We assume without loss of generality that in the card $\T'=(L',R',\sigma')$, the distance-to-root labeling of $R'$ is chosen so that $v_1'$ is matched to $u_{n-1}'$ whenever it is matched to the cherry of $R'$.}
    \label{table:v1un_cases}
    \begin{tabular}{|c |c |c |c|}
        \hline 
        Case & Subcase & Card $\T'$ & Feature \\
        \hline 
        \multirow{3}{*}{$v_2u_{n-1}\in \sigma$} &  $v_3u_{n-2}\in \sigma$ & all & $v_1'u_{n-1}',v_2'u_{n-2}'\in \sigma'$ \\  \cline{2-4}
            & \multirow{2}{*}{$v_3u_{n-2}\notin \sigma$} & $\T-v_i,i\leq 2$ & $v_1'u_{n-1}'\in \sigma', \, v_2'u_{n-2}'\notin \sigma'$ \\ \cline{3-4}
            & & all remaining & $v_1'u_{n-1}',v_2'u_{n-2}'\in \sigma'$ \\ \hline
        \multirow{4}{*}{$v_2u_{n-2}\in \sigma$} & \multirow{2}{*}{$v_3u_{n-1}\in \sigma$} & $\T-v_i,i\leq 3$ & $v_1'u_{n-1}',v_2'u_{n-2}'\in \sigma'$ \\\cline{3-4}
            & & all remaining & $v_1'u_{n-1}'\in \sigma', \, v_2'u_{n-2}'\notin \sigma'$ \\ \cline{2-4}
            & \multirow{2}{*}{$v_3u_{n-1}\notin \sigma$} & $\T-u_{n-1}$ & $v_1'u_{n-1}',v_2'u_{n-2}'\in \sigma'$ \\\cline{3-4}
            & & all remaining & $v_1'u_n'\in \sigma',v_2'u_{n-2}'\notin \sigma'$ \\ \hline
        \multirow{2}{*}{all remaining} & \multirow{2}{*}{None} & $\T-v_1$ & $v_1'u_{n-1}'\notin \sigma'$\\ \cline{3-4} 
            & & all remaining & $v_1'u_{n-1}'\in \sigma'$ \\ \hline 
    \end{tabular}
\end{table}

Therefore we now assume that $v_2$ is matched to either $u_{n-1}$ or $u_{n-2}$.
For the remaining four subcases, observe that the number of cards where $v_1'$ and $v_2'$ are matched to the cherry of $R'$ is either $n$ (if $v_2$ is matched to $u_{n-1}$ and $v_3$ is matched to $u_{n-2}$), $n-2$ (if $v_2$ is matched to $u_{n-1}$ but $v_3$ is not matched to $u_{n-2}$), $3$ (if $v_2$ is matched to $u_{n-2}$ and $v_3$ is matched to $u_{n-1}$), or $1$ (if $v_2$ is matched to $u_{n-2}$ but $v_3$ is not matched to $u_{n-1}$). As $n\geq 6$, these are all distinct, so $\Dm(\T)$ allows us to distinguish these four subcases. It now suffices to show that we can reconstruct $\T$ in each of them.

In the case where $v_2u_{n-1},v_3u_{n-2}\in \sigma$, there is a maximal sequence $v_1,\ldots,v_{m}$ of vertices satisfying $v_iu_{n-i+1}\in \sigma$ for all $1\leq i\leq m$, where $m\geq 3$ (see Figure~\ref{fig:subcase-maximalsequence}). 

\begin{figure}[htbp]
\centering
\begin{tikzpicture}
\pic[scale = 0.25, xscale = 1.7] at (-3,0){both_caterpillars}; 
\pic[scale = 0.25, xscale = 1.7] at (3,0) {both_caterpillars_card_1};
\end{tikzpicture}
\caption{The tanglegram $\mathcal{T}$ in case $v_i$ is matched to $u_{n-i+1}$ ($1\leq i\leq m$) and its cards $\mathcal{T}-v_i$, for $1\leq i \leq m$.}\label{fig:subcase-maximalsequence}
\end{figure}

If $m = n$, then all cards are identical. If $m < n$, then the $m$ cards $\T-v_i u_{n-i+1}$ with $1\leq i\leq m$ are identical with $v_1',\ldots,v_{m-1}'$ satisfying $v_i'$ being matched to $u_{n-i}'$, while $v_m'$ is not matched to $u_{n-m}'$. The other $n-m$ cards in $\Dm(\T)$ have $v_i'$ matched to $u_{n-i}'$ for $1\leq i\leq m$. Thus, we can determine $m$, and using one of the identical cards $\T-v_iu_{n-i+1}$ with $1\leq i\leq m$, we can reconstruct $\T$.

In the case where $v_2u_{n-1}\in \sigma$ and $v_3u_{n-2}\notin \sigma$, the two cards where $v_2'$ is not matched to a leaf in the cherry of $R'$ are $\T-v_1$ and $\T-v_2$. Furthermore, these two cards are identical, and either one of them can be used to reconstruct $\mathcal{T}$. 

In the case where $v_2u_{n-2},v_3u_{n-1}\in \sigma$, the three cards where $v_1'$ and $v_2'$ are matched to the cherry of $R'$ are all the same. These correspond to $\T-v_1,\T-v_2$, and $\T-v_3$. Using any of these three cards allows us to reconstruct $\T$. 

Finally, in the case where $v_2u_{n-2}\in \sigma$ and $v_3u_{n-1}\notin \sigma$, we have that $u_{n-1}$ is matched to some $v_k$ with $k>3$. There is a unique card, $\T-v_ku_{n-1}$, in which the cherry of $R'$ is matched to both $v_1'$ and $v_2'$. Among the remaining cards, there are $n-k$ cards (corresponding to  $\T - v_i$ with $i > k$) where $v_k'$ is matched to the cherry of $R'$ and $k-1$ cards (corresponding to $\T - v_i$ with $i < k$) where $v_{k-1}'$ is matched to the cherry of $R'$. Hence, we can identify $k$, and using the card $\T-v_ku_{n-1}$ allows us to reconstruct $\T$. 
\end{proof}

\begin{lemma}\label{thm:cat-cat}
 Let $\mathcal{T}=(C_n, C_n, \sigma)$ be a catergram of size $n\geq 6$. Then $\T$ is reconstructable from $\mathcal{D}^{m}(\T)$.
\end{lemma}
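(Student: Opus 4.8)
The plan is to reduce to Lemma~\ref{lem:catcathelper} wherever possible and to isolate a single genuinely new configuration. By Corollary~\ref{cor:trees-in-tangelgram-reconstructable}, the multideck $\Dm(\T)$ already determines that both trees equal $C_n$, so only the matching $\sigma$ must be recovered. Recall that the proof of Lemma~\ref{lem:catcathelper} does two things at once: it shows that $\Dm(\T)$ \emph{detects} whether $v_1$ is matched to the cherry of the right tree, and, when it is, reconstructs $\T$. Since a tanglegram $(L,R,\sigma)$ and its mirror $(R,L,\sigma^{-1})$ have multidecks obtained from one another by swapping the two sides of every card, the same lemma applied to the mirror tanglegram shows that $\Dm(\T)$ also detects whether $u_1$ is matched to the cherry of the left tree and reconstructs $\T$ in that case. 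Both trees being caterpillars, the mirror is again of the form $(C_n,C_n,\cdot)$, so the hypothesis of Lemma~\ref{lem:catcathelper} is met. Thus from $\Dm(\T)$ we may read off which of the following holds and dispatch the first two immediately: $v_1$ is matched to the cherry of $R$; $u_1$ is matched to the cherry of $L$; or neither.

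It remains to treat the \textbf{interior case}, in which $v_1$ is matched to some $u_a$ with $a\le n-2$ and $u_1$ is matched to some $v_b$ with $b\le n-2$. Here the strategy mirrors that of Lemma~\ref{lem:catcathelper}, but with the top leaf $v_1$ (rather than a cherry) as the anchor. The key structural fact is that deleting a single leaf from $C_n$ always yields $C_{n-1}$, so every card is again a caterpillar--caterpillar tanglegram and carries a well-defined top-left leaf $v_1'$ whose partner occupies a well-defined distance-to-root position in $R'$. First I would recover $a$, writing the matching as the permutation $\pi$ defined by $v_i u_{\pi(i)}\in\sigma$, so $a=\pi(1)$. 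In each of the $n-1$ cards $\T-v_iu_{\pi(i)}$ with $i\ge 2$ the leaf $v_1$ survives as $v_1'$, and its partner sits at relabeled position $a$ when $\pi(i)>a$ and at position $a-1$ when $\pi(i)<a$; counting shows the value $a$ occurs $n-a$ times and $a-1$ occurs $a-1$ times, while the single remaining card $\T-v_1u_a$ contributes one further value. Comparing these two consecutive positions and their multiplicities recovers $a$, and the mirror argument recovers $b$.

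Finally, the outlier card in the count above is exactly $\T-v_1u_a$, the card deleting the top-left matched pair; once it is singled out, reinserting a top leaf matched to position $a$ reconstructs $\T$ from it. \textbf{The main obstacle} is precisely this interior case when the counts degenerate: if $\pi(2)$ lies close to $a$ (so that the outlier value coincides with $a$ or $a-1$), or if the matching forms a \emph{staircase} $v_1u_a,\,v_2u_{a\pm1},\dots$ of consecutively matched leaves, then $\T-v_1u_a$ cannot be isolated by its partner-position alone, and one must instead detect the length of the maximal staircase and argue from one of the resulting identical cards, exactly as in the maximal-sequence subcase of Lemma~\ref{lem:catcathelper}. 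The cherry-swap ambiguity at the two positions $n-2,n-1$ of each $C_{n-1}$ must be tracked throughout; as in the helper lemma, the bound $n\ge 6$ (with $n=5$ checked directly) is what keeps the relevant multiplicities distinct.
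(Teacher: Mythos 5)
Your overall strategy is the paper's: reduce the cherry case to Lemma~\ref{lem:catcathelper}, then locate $i$ (your $a$) by counting, over the $n-1$ cards that retain $v_1$, which distance-to-root position the partner of $v_1'$ occupies ($n-i$ cards at position $i$, $i-1$ cards at position $i-1$), isolate the outlier card $\T-v_1u_i$, and reconstruct from it. That counting is correct and is exactly the content of the paper's Table~\ref{tab:caterpillarcases}. Two remarks on the framing: the mirror reduction (applying Lemma~\ref{lem:catcathelper} to $(R,L,\sigma^{-1})$ to handle $u_1$ matched to the cherry of $L$) is valid but unnecessary --- the paper never needs it, and recovering $b$ plays no role in the reconstruction; and the case the paper actually separates out first is $i=1$ (where the position-$(i-1)$ count is empty and a staircase $v_ku_k$ may hide the outlier), not the cherry case on the other side.

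The genuine gap is that you name the degenerate cases but do not resolve them, and they are not disposed of ``exactly as in'' Lemma~\ref{lem:catcathelper}. When only two partner-positions occur, you must decide whether the staircase ascends ($j=i+1$, giving counts $n-i+1$ and $i-1$) or descends ($j=i-1$, giving counts $n-i$ and $i$); when three consecutive positions occur, you must separate $j=i+2$ (the extreme position has multiplicity $1$) from $j=i-2$ (the extreme position has multiplicity $n-i\geq 2$). This up/down ambiguity has no counterpart in the cherry-anchored lemma, where the staircase can only run one way, so the appeal to that lemma does not cover it; one needs the explicit multiplicity comparisons above (and, once the direction is known, the maximal-sequence argument to find $m$ and read off $\T$ from one of the $m$ identical cards). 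Your proposal identifies where the difficulty lies and the counts you would need are the right ones, but as written the argument stops exactly at the step that occupies the second half of the paper's proof.
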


\begin{proof}
    First note that Lemma~\ref{lem:reduce} implies that the class of catergrams of size at least $6$ are deck-decidable.
    
    Assume $v_1$ is matched to $u_i$ and $v_2$ is matched to $u_j$ with $i \neq j$. 
    By Lemma~\ref{lm:matchcherry}, we can determine from $\Dm(\T)$ whether $i\ge n-1$ or $i<n-1$.
    By Lemma~\ref{lem:catcathelper}, $\T$ is reconstructable if $i \ge n-1$, so we will assume $i < n-1$. Additionally, if $j\in \{n-1,n\}$, we set $j=n-1$. 

    If $i=1$, then at least $n-1$ of the cards in $\Dm(\T)$ have $v_1'$ matched to $u_1'$. If instead $i\neq 1$, then there are at most two such cards (if $i=2$, then 
    $\T-u_1$, if $j=1$, then $\T-v_1$). Thus, from $\Dm(\T)$, we can determine whether or not $i = 1$.

    Suppose $i = 1$. Let $v_1,\ldots,v_m$ be a maximal sequence of vertices satisfying that $v_i$ is matched to $u_i$ for all $1\leq i\leq m$. If $m=n$, all cards are identical and the tanglegram is reconstructable. Since $m=n-1$ is not possible, we consider $m<n-1$. Then, in exactly $m$ identical cards (corresponding to $\mathcal{T} - v_i u_i$ with $1 \leq i \leq m$), we have $v_k'$ matched to $u_k'$ for $1\leq k\leq m-1$ and $v_{m}'$ not matched to $u_m'$. In the remaining cards, $v_k'$ and $u_k'$ are matched for $1\leq k\leq m$. Thus, we can identify $m$, and the $m$ identical cards resulting from $\T-v_i u_i $ with $1\leq i \leq m$ allow us to reconstruct $\T$. 

    Suppose instead $i\neq 1$. In the cards of $\Dm(\T)$, we consider which vertex is matched to $v_1'$. From the cards other than $\T-v_1 u_i$, we get $i-1$ cards with $v_1'$ matched to $u_{i-1}'$ (corresponding to $\T - u_k$ with $1 \leq k < i$) and $n-i$ cards with $v_1'$ matched to $u_i'$ (corresponding to $\T - u_k$ with $i < k \leq n$). Which vertex $v_1'$ is matched to on the remaining card, $\T-v_1 u_i$,  depends on the relationship of $i$ and $j$. The vertices matched to $v_1'$ and their multiplicities in $\Dm(\T)$ are collected in Table~\ref{tab:caterpillarcases}. 
    
    \begin{table}[htbp]
    \caption{For $1<i<n-2$, the vertices matched to $v_1'$ in $\Dm(T)$ with multiplicities.}
    \label{tab:caterpillarcases}
    \begin{tabular}{|c|c|c|c|}
    \hline 
    $j>i+1$ & $j=i+1$ & $j=i-1$ & $j<i-1$ \\ \hline 
    $ u_{j-1}' \times (1)$ & $u_i' \times (n-i+1)$ & $ u_i' \times (n-i)$ & $ u_i' \times (n-i)$  \\ 
     $ u_i' \times (n-i)$ & $ u_{i-1}' \times (i-1)$ & $ u_{i-1}' \times (i)$ & $ u_{i-1}' \times (i-1)$ \\ 
     $ u_{i-1}' \times (i-1)$ & & & $ u_j' \times (1)$ \\ \hline 
    \end{tabular}
    \end{table}

    From Table~\ref{tab:caterpillarcases}, if $v_1'$ is matched to exactly two $u_k'$, then $j=i+1$ or $j=i-1$ and the multiplicities allow us distinguish between these cases. If $j=i+1$, then there is a maximal sequence of vertices $v_1,\ldots,v_m$ satisfying that $v_k$ is matched to $u_{i+k-1}$ for $1 \leq k \leq m-1$. Hence, there are $m$ identical cards (corresponding to $\T - v_i$ with $1 \leq i \leq m$) with $v_k'$ matched to $u_{i+k-1}'$ for $1 \le k \le m-1$ and $v_m'$ not matched to $ u_{i+m-1}'$. However, in all other cards, $v_1',\ldots,v_m'$ are matched to a set of $u$'s that are consecutively and increasingly indexed. Thus, we can identify $m$, and one of the $m$ identical cards resulting from removing $v_ku_{i+k-1}$ with $1\leq k\leq m$ allows us to reconstruct $\T$. If instead $j = i-1$, then a similar argument with $v_k u_{i-k+1} \in \sigma$ for $1\leq k\leq m$ shows that $\T$ is reconstructable. 

    Finally, we consider when $v_1'$ is matched to $u_k'$ for three different values of $k$. If these indices are non-consecutive, then we can easily identify both $i$ and the card $\T-v_1 u_i$. In this case, $\T$ is reconstructable from this card. If the indices are consecutive, then $j=i+2$ or $j=i-2$ and we consider the multiplicity of the cards in which $v_1'$ is matched to $u_k'$ with $k$ maximum. In the former case, there is one such card and in the latter case, there are $n-i \ge 2$ such cards (since $i \leq n-1$). Hence, we can distinguish between these cases, identify both $i$ and the card $\T-v_1 u_i$, and reconstruct $\T$. 
\end{proof}

\section{Caterpillar tanglegrams with a type 1 left or right tree} \label{Sec:Type1Reconstruction}

If the left or right tree of the caterpillar tanglegram is a type 1 tree, it is enough to consider the case when the left tree is $C_n$ and the right tree is type 1. We will use the distance-to-root labeling $\{u_1,\ldots,u_i\}$ for the strippable leaves of $R$,
and the corresponding distance to root labeling $\{u_1'\ldots,u'_{i'}\}$ for the strippable leaves of the right subtree of a card $(C_{n-1},R',\sigma')$ whenever the right tree $R'$ of the card is type 1.

\begin{lemma}\label{lem:type1-cherrywhere}
    Let $\T=(C_n, R, \sigma)$ be a tanglegram of size $n \geq 6$ such that $R$ is of type 1 with stripping $(i,Q)$. Then, $\D^m(\T)$ allows us to determine whether both, none, or exactly one of the two leaves of the cherry of the left tree $C_n$ are matched to a leaf from $Q$.
\end{lemma}

\begin{proof}
    Note that by Lemma~\ref{lm:type1removal}, for any $\T'=(L',R',\sigma')$ we can determine which leaves of $R'$ correspond to leaves of $Q$, and which leaves correspond to strippable leaves of $R$. For the sake of brevity, we refer to the leaves of $R'$ corresponding to leaves of $Q$ as leaves of $Q$, and if a leaf of $L'$ is matched to a leaf of $Q$ in $R'$, we say it is matched to $Q$.
    
    As Table~\ref{table:cat-type1-cherrymatch} shows, $\D^m(\T)$ allows us to determine whether both, none, or exactly one of $v_n$ and $v_{n-1}$ are matched to a leaf from $Q$.

    \begin{table}[htbp]
    \caption{Case analysis for the number of cards in $\D^m(\T)$ with the property that both of $v'_{n-1}$ and $v'_{n-2}$, exactly one of them, or none of them is matched to a leaf of $Q$ depending on whether both of $v_n$ and $v_{n-1}$, exactly one of them, or none of them, is matched to a leaf of $Q$ in $\T$.}
    \label{table:cat-type1-cherrymatch}
    \begin{tabular}{|>{\centering\arraybackslash}m{0.2\linewidth}| >{\centering\arraybackslash}m{0.2\linewidth}| >{\centering\arraybackslash}m{0.2\linewidth}| >{\centering\arraybackslash}m{0.2\linewidth}|}
    \hline 
     Case & number of cards with both $v'_{n-1}$ and $v'_{n-2}$ matched to $Q$ & number of cards with exactly one of  $v'_{n-1}$ and $v'_{n-2}$ matched to $Q$&  number of cards with none of  $v'_{n-1}$ and $v'_{n-2}$ matched to $Q$\\ \hline 
     both $v_n$ and $v_{n-1}$ matched to $Q$ & $\geq n-2 \geq 4$ & $\leq 2$ & -- \\  \hline
     exactly one of $v_n$ and $v_{n-1}$ matched to $Q$ & $\leq 1$ &  $\geq n-2 \geq 4$ &  $\leq 1$ \\ \hline
     none of $v_n$ and $v_{n-1}$ matched to $Q$ & -- &  $\leq 2$ & $\geq n-2 \geq 4$\\ \hline
    \end{tabular}
    \end{table}
\end{proof}

\begin{lemma}\label{lem:type1-cherrysame}
    Let $\T=(C_n, R, \sigma)$ be a tanglegram of size $n \geq 6$ such that $R$ is of type 1 with stripping $(i,Q)$ and such that either both or none of the leaves of the cherry of the left tree $C_n$ are matched to a leaf from $Q$. Then $\T$ is reconstructable from $\Dm(\T)$.
\end{lemma}

\begin{proof}
Let $X$ be the set of leaves of the left tree $C_n$ of $\T$ that are matched to $Q$, and $Y$ be the set of leaves of $C_n$ that are matched to the strippable leaves of $R$.
    Define the sequence $1\le j_1<j_2<\ldots<j_{\size{Q}}\le n$ such that
    $X=\{v_{j_q}: 1\le q\le\size{Q}\}$.

Assume first that both $v_n$ and $v_{n-1}$ are matched to $Q$. Let $s,t$ be such that the $s+1$ leaves in the sequence $v_n, v_{n-1}, \ldots, v_{n-s}$ are matched to $Q$, whereas the $t-s$ leaves in the sequence $v_{n-s-1}, \ldots, v_{n-t}$ are matched to strippable leaves of $R$, and either $t=n-1$ of the leaf $v_{n-t-1}$ is matched to $Q$.
Note that the sequences are defined such that they form a consecutive sequence in the distance-to-root labeling, and our assumption gives $1\le s\le n-i-1$ 
        and $s+1\le t\le n-1$.

        On each card $\T'=(C_{n-1},R',\sigma')\in\Dm(\T)$ at least one of the leaves of the  cherry of the left tree $C_{n-1}$ is matched to $Q$. For each, we choose a distance-to-root labeling $\{v_1',v_2'\ldots,v_{n-1}'\}$ so that $v'_{n-1}$ is matched to $Q$.
        
        In this case, there are precisely $s+1\ge 2$ cards $\T'\in\Dm(\T)$ (corresponding to the removal of $v_n, v_{n-1}, \ldots, v_{n-s}$, respectively) such that the $s\ge 1$ leaves in the sequence $v'_{n-1},v'_{n-2},\ldots,v'_{n-s}$ are matched to $Q$, while the $t-s$ leaves in the sequence $v'_{n-s-1},\ldots,v'_{n-t}$ are not matched to $Q$ and either $t=n-1$ or $v_{n-t-1}$ is matched to $Q$ again. On the remaining $n-s-1 \geq 1$ cards (corresponding to the removal of $v_{n-s-1}, \ldots, v_1$), the $s+1$ leaves in the sequence $v'_{n-1}, \ldots, v'_{n-s-1}$ are matched to $Q$. Thus, we can determine $s$ and $t$. 

        Consider a card $\T'= (C_{n-1},R',\sigma')$ where $v'_{n-s-1}$ is not matched to $Q$. Then $\T'$ is missing one of $v_n,v_{n-1},\ldots, v_{n-s}$ and a leaf of $Q$. Thus, from $\T'$, we can determine $X$ and $Y$. Furthermore, for $j,m$ with $1\le j\le n-s-1$ and $1\le m\le i$, we have $v'_ju'_m\in\sigma'$ precisely when $v_{j}u_m\in\sigma$ so we can determine the matching between $Y$ and the strippable leaves of $R$.

        Now we know $X$ and $j_1,\ldots,j_{\size{Q}}$. By our assumption, $j_{\size{Q}}=n$ and $j_{\size{Q}-1}=n-1$.
         Choose a card $\T'=(C_{n-1},R',\sigma')$ 
         that misses one of the strippable leaves of $R$. Then $\T'$ includes all leaves of $Q$ and $X$. 
         The matching between $X$ and $Q$ can be determined from $\T'$ as follows.
         Let $X'$ be the set of leaves in $C_{n-1}$ that are matched to $Q$, and let $j'_1<j'_2<\ldots<j'_{\size{Q}}$ such that
         $X'=\{v'_{j'_q}: 1\le q\le\size{Q}\}$. 
         For each $q$ with $1\le q\le\size{Q}-2$ and each leaf $w$ of $Q$ we have
         that $v'_{j'_q}w\in\sigma'$ precisely when
         $v_{j_q}w\in\sigma$. Moreover, if $x,y$ are such that $v'_{n-2}x,v'_{n-1}y\in\sigma'$, then either 
         $v_{n-1}x,v_{n}y\in\sigma$ or $v_{n-1}y,v_nx\in\sigma$. As $v_{n-1},v_n$ are leaves of a cherry, both of these result in the same tanglegram, therefore we can reconstruct $\T$.

Assume next that neither  $v_n$ nor $v_{n-1}$ is matched to $Q$. In this case,  $v_n$ and $v_{n-1}$ are both matched to strippable leaves of $R$. 
        Choose $s,t$ to be maximal such that 
        the $s+1$ leaves in the sequence $v_{n}, v_{n-1}, \ldots, v_{n-s}$ are matched to strippable leaves of $R$ and the 
        $t-s \geq 1$ leaves in the sequence
        $v_{n-s-1}, \ldots, v_{n-t}$ are matched to $Q$. We can now repeat the argument used previously with the roles of $Q$ and the strippable leaves of $R$ interchanged. 
\end{proof}

\begin{lemma}\label{lem:cat-type1}
    Let $\T=(C_n, R, \sigma)$ be a tanglegram of size $n \geq 6$ such that $R$ is type 1. Then, $\T$ is reconstructable from the multideck $\Dm(\T)$.
\end{lemma}

\begin{proof}  
By Lemma~\ref{lem:reduce}, we can reconstruct the left tree $C_n$ and the right tree $R$ from $\D(\T)$; moreover, the class of tanglegrams of size at least $6$ where the right tree is $C_n$ and the left tree is type 1 is deck-decidable. Since $R$ is type 1, for some $i>0$ and $Q$ a non-strippable tree, we have that the stripping of $R$ is $(i,Q)$.
By Lemma~\ref{lem:type1-cherrywhere}, $\Dm(\T)$ allows us to determine how many of the leaves of the cherry of the left tree $C_n$ are matched to $Q$.
If both or neither of the leaves are matched to $Q$, we are done by Lemma~\ref{lem:type1-cherrysame}.

So assume that exactly one of $v_n,v_{n-1}$ is matched to $Q$ and the other is matched to a strippable leaf of $R$.
Without loss of generality, let the distance-to-root labeling be such that $v_n$ is matched to $Q$ and $v_{n-1}$ is matched to a strippable leaf of $R$. As before, let $X$ be the set of leaves of the left tree $C_n$ of $\T$ that are matched to $Q$, and let $Y$ be the set of leaves of $C_n$ that are matched to the strippable leaves of $R$.
Define $1\le j_1<j_2<\ldots<j_{\size{Q}}\le n$ such that
    $X=\{v_{j_q}: 1\le q\le\size{Q}\}$.        
    
Notice that there is precisely one card $\T^*=(C_{n-1},R^*,\sigma^*)$
        in the deck of $\T$ (either $\T-v_n$ or $\T-v_{n-1}$) , where the leaves of the cherry in the left tree are both matched to $Q$ or they are both matched to strippable leaves of $R$; moreover,
         $v_{n-2}$ is matched to $Q$ by $\sigma$  precisely when in $\T^*$ both leaves of the cherry of the left tree are matched to $Q$.

         Thus, $\T^*$ allows us to determine  $X$ and $Y$.
         
         Without loss of generality we choose the distance-to-root labeling of the left tree $C_{n-1}$ for any 
         $\T'\ne\T^*$ such that $v'_{n-1}$ is matched to $Q$. 
         
         Choose a card $\T'$ different from $\T^*$ from which a leaf of $Q$ has been removed. As $\size{Q}\ge 4$, this is possible. 
         As in this card $v'_{n-1}$ is matched to $Q$, this card determines
         the matching between $Y$ and the strippable leaves of $R$. We only need to determine the matching between $X$ and the leaves of $Q$. 
         
 	If  $i\ge 2$, choose a card $\T'$ different from $\T^*$ from which a strippable leaf of $R$ has been removed. As $i\ge 2$, this is possible. As in this card $v'_{n-1}$ is matched to $Q$ and  $v'_{n-2}$ is matched to a strippable leaf of $R$, the matching between $X$ and $Q$ can be determined.

        Thus, we are left with the case  when $i=1$. Therefore $R=C_1\oplus Q$ with $\size{Q}\ge 5$. 
            As $v_{n-1}$ is matched to the (unique) strippable leaf of $R$, $v_{n-2}$ is matched to $Q$. 
                                    
            In this case, note that $\T^*= \T - v_{n-1}$ allows us to determine not just $X$ and $Y$, but also both the matching of the leaves of $X-\{v_n,v_{n-2}\}$ to the leaves of $Q$, and the two vertices $x,y$ of $Q$ to which $v_n$ and $v_{n-2}$ are matched. 
            To reconstruct $\T$, we only need to determine which of $x,y$ is matched with $v_n$, or realize that both possibilities result in the same tanglegram.
		
	 We write $Q=Q_1\oplus Q_2$, where $2\le\size{Q_1}\le\size{Q_2}$.
	 
	 We consider the cases as follows.
	 \begin{enumerate}
				            
          \item Vertices $x$ and $y$ are in the same pending subtree $Q_k$ of $Q$:

          In this case in any card $\T'$ different from $\T^*$, the vertex $v'_{n-1}$ is matched to $Q_k$ and $v'_{n-2}$ is matched to the strippable leaf of $R$. Consider a card $\T' \neq \T^*$ where all vertices of the pending subtree of $Q$ to which $v'_{n-1}$ is matched are present.

	In $\T'$, $v'_{n-1}$ corresponds to $v_n$ 
    and $v'_{n-3}$ corresponds to $v_{n-2}$ and hence we can determine $\T$.   

	\item Vertices $x$ and $y$ are in different pending subtrees of $Q$:

Consider cards that contain the strippable leaf of $R$ (i.e. cards different from $\T^*$); there are $n-1$ of them.
In exactly one of these cards (either
$\T-v_n$ or $\T-v_{n-2}$), vertices $v'_{n-1}$ and $v'_{n-3}$ are matched to the same pending subtree of $Q$. In the remaining $n-2$ cards, 
$v'_{n-1}$ and $v'_{n-3}$ are matched to different pending subtrees of $Q$. Moreover, of these, $n-3\ge 3$ contain both $v_n$ and $v_{n-2}$ from the original graph while one contains exactly one of $v_n,v_{n-2}$. We further consider cases. 
\begin{enumerate}
\item $Q$ is type 2c:

By Lemma~\ref{lm:type2removal}, we can identify vertices corresponding to $Q_1$ and $Q_2$ in any card.
 
Consider the cards different from $\T^*$ in which $v'_{n-1}$ and $v'_{n-3}$ are matched to different pending subtrees of $Q$, and choose $k$ such that that $v'_{n-1}$ is matched to $Q_k$ in the majority of the cards. Then $v_n$ is matched to the vertex from $x,y$ in $Q_k$ and $\T$ is determined.  

\item $Q$ is type 2b:

In this case, as $\size{Q}$ is odd and $i=1$, $n$ is even. Moreover, $Q_1\in\D(Q_2)$. 
By Lemma~\ref{lm:type2removal}, we can identify for any card whether we removed a vertex from $Q_1$ or $Q_2$ to obtain this card.
Moreover, unless the card is type 2a (and we removed a vertex from $Q_2$), we can identify the
vertices corresponding to $Q_1$ and $Q_2$ in the card. 

For $j\in\{1,2\}$, we denote by $X_j$ the set of vertices in $X-\{v_n,v_{n-2}\}$ that are matched to $Q_j$. Note that $\T^*$ determines $X_1$ and $X_2$.
As $v_n,v_{n-2}$ are matched to different pending subtrees of $Q$, 
$|X_2|=\frac{n-2}{2}\ge 2$  and $|X_1|=\frac{n-4}{2}\ge 1$. 

We have the following possibilities:

\begin{enumerate}
\item $v_{n-3}\in X_2$: 

This means that exactly half of the leaves $v_1,v_2,\ldots,v_{n-4}$ are matched to $Q_1$ and the other half are matched to $Q_2$.

There are precisely $|X_1|\ge 1$ cards (of the form $\T-v_j$, $v_j\in X_1=X_1-\{v_{n-3}\}$) such that the number of leaves among $v'_1,\ldots,v'_{n-4}$ that are matched to
$Q_1$ is $|X_1|-1$; in the rest of the cards the number of such leaves is $|X_1|$. 
Take a card in which the number of leaves among $v'_1,\ldots,v'_{n-4}$ that are matched to
$Q_1$ is $|X_1|-1$. 
As in this card we have that for $t\in \{1,2,3\}$, vertex $v'_{n-t}$ corresponds to $v_{n+1-t}$, $\T$ is determined.

\item $v_{n-3}\in X_1$ and $|X_1|\ge 2$: 

Among the $\size{Q_1}=|X_1|+1\ge 3$ cards that have a leaf from $Q_1$ removed, one does not contain both $v_n$ and $v_{n-2}$, the remaining $|X_1|\ge2$ cards do.
Without loss of generality, $x$ is the vertex of $Q_2$. Then $x$ is matched to $v_n$ precisely when in the majority of the cards which have a leaf from $Q_1$ removed, the vertex $v'_{n-1}$ is matched to $x$. Thus, we can determine $\T$ from $\Dm(\T)$.

\item $X_1=\{v_{n-3}\}$:

In this case $\size{Q_1}=2$, $\size{Q_2}=3$, $n=6$, and $R=C_1\oplus(C_2\oplus C_3)$.
Hence, $v_{n-3}=v_3$ is matched to $Q_1=C_2$.

Without loss of generality $x$ is in $Q_1$ and $y$ is in $Q_2$.

We need to determine whether $v_6$ is matched to $x$ or $y$, or, in other words, whether $x$ is matched to $v_6$ or $v_4$.

\begin{figure}[http]
\begin{tikzpicture}[scale=.7]
\draw[fill=gray!20] (2,3)--(3,2)--(2,1)--(2,3);   
\node at (2.5,2) {$Q_2$}; 
\node[fill=black,rectangle,inner sep=2pt,label=above:{$v_5$}] (v5) at (0,6) {};      
\node[fill=black,rectangle,inner sep=2pt,label=above:{$v_6$}] (v6) at (0,5) {};      
\node[fill=black,rectangle,inner sep=2pt,label=above:{$v_4$}] (v4) at (0,4) {}; 
\node[fill=black,rectangle,inner sep=2pt,label=above:{$v_3$}] (v3) at (0,3) {};  
\node[fill=black,rectangle,inner sep=2pt,label=above:{$v_2$}] (v2) at (0,2) {};      
\node[fill=black,rectangle,inner sep=2pt,label=above:{$v_1$}] (v1) at (0,1) {};      
\node[fill=black,rectangle,inner sep=2pt] (u5) at (2,6) {};      
\node[fill=black,rectangle,inner sep=2pt,label=above:$x$] (x) at (2,5) {};      
\node[fill=black,rectangle,inner sep=2pt] (u3) at (2,4) {}; 
\node[fill=black,rectangle,inner sep=2pt,label=left:$y$] (y) at (2,3) {};  
\draw (v5)--(-.5,5.5)--(v6);
\draw (-.5,5.5)--(-1,5)--(v4);
\draw (-1,5)--(-1.5,4.5)--(v3);
\draw (-1.5,4.5)--(-2,4)--(v2);
\draw (-2,4)--(-2.5,3.5)--(v1);
\draw (x)--(2.5,4.5)--(u3);
\draw (3,2)--(4,3) --(2.5,4.5);
\draw (u5)--(4.5,3.5) --(4,3);
\node[fill=black,circle,inner sep=1pt]  at (-.5,5.5) {};
\node[fill=black,circle,inner sep=1pt]  at (-1,5) {};
\node[fill=black,circle,inner sep=1pt]  at (-1.5,4.5) {};
\node[fill=black,circle,inner sep=1pt]  at (-2,4) {};
\node[fill=black,circle,inner sep=1pt]  at (-2.5,3.5) {};
\node[fill=black,circle,inner sep=1pt]  at (2.5,4.5) {};
\node[fill=black,circle,inner sep=1pt]  at (3,2) {};
\node[fill=black,circle,inner sep=1pt]  at (4,3) {};
\node[fill=black,circle,inner sep=1pt]  at (4.5,3.5) {};
\draw[dashed] (v6)--(x);
\draw[dashed] (v4)--(y);
\draw[dashed] (v5)--(u5);
\draw[dashed] (v3)--(u3);
\draw[dashed] (v2)--(2,2);
\draw[dashed] (v1)--(2,1);
\node at (1,.5) {$\T_1$};
\end{tikzpicture}
\qquad
\begin{tikzpicture}[scale=.7]
\draw[fill=gray!20] (2,3)--(3,2)--(2,1)--(2,3);   
\node at (2.5,2) {$Q_2$}; 
\node[fill=black,rectangle,inner sep=2pt,label=above:{$v_5$}] (v5) at (0,6) {};      
\node[fill=black,rectangle,inner sep=2pt,label=above:{$v_6$}] (v6) at (0,5) {};      
\node[fill=black,rectangle,inner sep=2pt,label=above:{$v_4$}] (v4) at (0,4) {}; 
\node[fill=black,rectangle,inner sep=2pt,label=above:{$v_3$}] (v3) at (0,3) {};  
\node[fill=black,rectangle,inner sep=2pt,label=above:{$v_2$}] (v2) at (0,2) {};      
\node[fill=black,rectangle,inner sep=2pt,label=above:{$v_1$}] (v1) at (0,1) {};      
\node[fill=black,rectangle,inner sep=2pt] (u5) at (2,6) {};      
\node[fill=black,rectangle,inner sep=2pt,label=above:$x$] (x) at (2,5) {};      
\node[fill=black,rectangle,inner sep=2pt] (u3) at (2,4) {}; 
\node[fill=black,rectangle,inner sep=2pt,label=left:$y$] (y) at (2,3) {};  
\draw (v5)--(-.5,5.5)--(v6);
\draw (-.5,5.5)--(-1,5)--(v4);
\draw (-1,5)--(-1.5,4.5)--(v3);
\draw (-1.5,4.5)--(-2,4)--(v2);
\draw (-2,4)--(-2.5,3.5)--(v1);
\draw (x)--(2.5,4.5)--(u3);
\draw (3,2)--(4,3) --(2.5,4.5);
\draw (u5)--(4.5,3.5) --(4,3);
\node[fill=black,circle,inner sep=1pt]  at (-.5,5.5) {};
\node[fill=black,circle,inner sep=1pt]  at (-1,5) {};
\node[fill=black,circle,inner sep=1pt]  at (-1.5,4.5) {};
\node[fill=black,circle,inner sep=1pt]  at (-2,4) {};
\node[fill=black,circle,inner sep=1pt]  at (-2.5,3.5) {};
\node[fill=black,circle,inner sep=1pt]  at (2.5,4.5) {};
\node[fill=black,circle,inner sep=1pt]  at (3,2) {};
\node[fill=black,circle,inner sep=1pt]  at (4,3) {};
\node[fill=black,circle,inner sep=1pt]  at (4.5,3.5) {};
\draw[dashed] (v6)--(y);
\draw[dashed] (v4)--(x);
\draw[dashed] (v5)--(u5);
\draw[dashed] (v3)--(u3);
\draw[dashed] (v2)--(2,2);
\draw[dashed] (v1)--(2,1);
\node at (1,.5) {$\T_2$};
\end{tikzpicture}
\vskip 10pt
\begin{tikzpicture}[scale=.7]
\draw[fill=gray!20] (2,3)--(3,2)--(2,1)--(2,3);   
\node at (2.5,2) {$Q_2$}; 
\node[fill=black,rectangle,inner sep=2pt,label=above:{$v_5$}] (v5) at (0,5) {};        
\node[fill=black,rectangle,inner sep=2pt] (v4) at (0,4) {}; 
\node[fill=black,rectangle,inner sep=2pt] (v3) at (0,3) {};  
\node[fill=black,rectangle,inner sep=2pt,label=above:{$v_2$}] (v2) at (0,2) {};      
\node[fill=black,rectangle,inner sep=2pt,label=above:{$v_1$}] (v1) at (0,1) {};      
\node[fill=black,rectangle,inner sep=2pt] (u5) at (2,5) {};      
\node[fill=black,rectangle,inner sep=2pt] (u3) at (2,4) {}; 
\node[fill=black,rectangle,inner sep=2pt,label=left:$y$] (y) at (2,3) {};  
\draw (v4)--(-.5,4.5)--(v5);
\draw (v3)--(-1,4) --(-.5,4.5);
\draw (-1,4)--(-1.5,3.5)--(v2);
\draw (-1.5,3.5)--(-2,3)--(v1);
\draw (3,2)--(3.5,2.5)--(u3);
\draw (u5)--(4,3) --(3.5,2.5);
\node[fill=black,circle,inner sep=1pt]  at (-.5,4.5) {};
\node[fill=black,circle,inner sep=1pt]  at (-1,4) {};
\node[fill=black,circle,inner sep=1pt]  at (-1.5,3.5) {};
\node[fill=black,circle,inner sep=1pt]  at (-2,3) {};
\node[fill=black,circle,inner sep=1pt]  at (3,2) {};
\node[fill=black,circle,inner sep=1pt]  at (4,3) {};
\node[fill=black,circle,inner sep=1pt]  at (3.5,2.5) {};
\draw[dashed] (v4)--(y);
\draw[dashed] (v5)--(u5);
\draw[dashed] (v3)--(u3);
\draw[dashed] (v2)--(2,2);
\draw[dashed] (v1)--(2,1);
\node at (1,.5) {$\T_1-v_6=\T_2-v_4=\T_2-v_3$};
\end{tikzpicture}
\qquad
\begin{tikzpicture}[scale=.7]
\draw[fill=gray!20] (2,3)--(3,2)--(2,1)--(2,3);   
\node at (2.5,2) {$Q_2$}; 
\node[fill=black,rectangle,inner sep=2pt,label=above:{$v_5$}] (v5) at (0,5) {};      
\node[fill=black,rectangle,inner sep=2pt,label=above:{$v_6$}] (v6) at (0,4) {};      
\node[fill=black,rectangle,inner sep=2pt,label=above:{$v_4$}] (v4) at (0,3) {}; 
\node[fill=black,rectangle,inner sep=2pt,label=above:{$v_2$}] (v2) at (0,2) {};      
\node[fill=black,rectangle,inner sep=2pt,label=above:{$v_1$}] (v1) at (0,1) {};      
\node[fill=black,rectangle,inner sep=2pt] (u5) at (2,5) {};      
\node[fill=black,rectangle,inner sep=2pt,label=above:$x$] (x) at (2,4) {};      
\node[fill=black,rectangle,inner sep=2pt,label=left:$y$] (y) at (2,3) {};  
\draw (v5)--(-.5,4.5)--(v6);
\draw (-.5,4.5)--(-1,4)--(v4);
\draw (-1,4)--(-1.5,3.5)--(v2);
\draw (-1.5,3.5)--(-2,3)--(v1);
\draw (3,2)--(3.5,2.5) --(x);
\draw (u5)--(4,3) --(3.5,2.5);
\draw[dashed] (v6)--(x);
\draw[dashed] (v4)--(y);
\draw[dashed] (v5)--(u5);
\draw[dashed] (v2)--(2,2);
\draw[dashed] (v1)--(2,1);
\node[fill=black,circle,inner sep=1pt]  at (-.5,4.5) {};
\node[fill=black,circle,inner sep=1pt]  at (-1,4) {};
\node[fill=black,circle,inner sep=1pt]  at (-1.5,3.5) {};
\node[fill=black,circle,inner sep=1pt]  at (-2,3) {};
\node[fill=black,circle,inner sep=1pt]  at (3,2) {};
\node[fill=black,circle,inner sep=1pt]  at (4,3) {};
\node[fill=black,circle,inner sep=1pt]  at (3.5,2.5) {};
\node at (1,.5) {$\T_1-v_3$};
\end{tikzpicture}
\caption{Case analysis: when $x$ is matched to $v_6$ ($\T_1$) and when $x$ is matched to $v_4$ ($\T_2$).}
\label{fig:c6c5card}
\end{figure}

There are precisely two trees in $\D(\T)$ with $R'=C_5$: $\T-v_3$ and $\T-x$. 

If $x$ is matched to $v_6$, then
$\T-x=\T-v_6$ and $\T-v_3$ are different cards in the deck, because in
$\T-v_6$ we have $v'_4$ matched to $u'_j$ for some $j\ge 3$ while in
$\T-v_3$ we have $v'_4$ matched to $u'_2$. 

On the other hand, if $x$ is matched to $v_4$, then the cards $\T-v_3$ and $\T-x=\T-v_4$ are the same card (with multiplicity $2$ in the multideck) (see Figure~\ref{fig:c6c5card}).

Therefore, we can determine whether $v_6$ is matched to $x$ or $y$, and
$\T$ is reconstructable from $\Dm(\T)$.
\end{enumerate}

\item $Q$ is type 2a: 

Since $\size{Q}$ is even, $n$ is odd, and $n\ge 7$.

Among $v_1,v_2,\ldots,v_{n-3}$ exactly $\frac{n-3}{2}\ge 2$ leaves are matched to $Q_1$ and the same number of leaves are matched to $Q_2$. Without loss of generality, $x$ is in the pending subtree matched to $v_1$, and this pending subtree is labeled $Q_1$. 

For $j\in\{1,2\}$, we denote by $X_j$ the set of vertices in $X-\{v_n,v_{n-2}\}$ that are matched to $Q_j$. Note that $\T^*$ determines $X_1$ and $X_2$ in this case as well
(since it determines the partition $\{X_1,X_2\}$ and $v_1 \in X_1$).
Choose $t$ such that in the sequence $v_1,\ldots,v_t$, all leaves are matched to $Q_1$ but $v_{t+1}$ is matched to $Q_2$. We have $1\le t\le\frac{n-3}{2}<n-3$.
As we know $X_1$, we know the value of $t$. 

If $t>1$, then
in every card, we have $v'_1,\ldots,v'_{t-1}$ matched
to $Q_1$. Choose a card in which $v'_1$ and $v'_t$ are matched to different pending subtrees of $Q'$. There are exactly $t\ge 2$ such cards (corresponding to $\T-v_p$ for $1\le p\le t<n-3$). Then $v_n$ is matched to $x$ precisely when $v'_{n-1}$ and $v'_1$ match to the same pending subtree of $Q'$ in these cards.

If $t=1$, there is precisely one card in which
$v'_1$ and $v'_2$ are matched to the same pending subtree of $Q'$. This card is $\T-v_1$ if $v_3$ is matched to $Q_2$ and $\T-v_2$ if $v_3$ is matched to $Q_1.$ As $n-3\ge 4$, and we know $X_1$ and $X_2$, we know whether $v_1$ and $v_3$ are matched to the same pending subtree of $Q$. Therefore this card determines $\T$.

\end{enumerate}
\end{enumerate}
\end{proof}

\section{Caterpillar tanglegrams with type 2 left or right tree} \label{Sec:Type2Reconstruction}
As before, it is enough to consider tanglegrams of the form $(C_n,R,\sigma)$ where $R$ is type $2$. 

Throughout this section, we will assume that $\{v_1,\ldots,v_n\}$ is a distance-to-root labeling of the leaves of the left tree $C_n$ and, on each card, we let $\{v_1',\ldots,v_{n-1}'\}$ be a distance-to-root labeling of the leaves of the left tree $C_{n-1}$. If $v_n,v_{n-1}$ are matched to different pending subtrees of $R$, without loss of generality, we choose the labeling such that $v_{n-1}$ and $v_{n-2}$ are matched to the same subtree of $R$.

We write $R = R_1 \oplus R_2$ with $\size{R_2}\ge \size{R_1} \geq 2$. If $\size{R_1}=\size{R_2}$, without loss of generality, we choose the labeling of the pending
subtrees of $R$ so that $v_n$ is matched to $R_1$. 

We set $i$ such that $v_n$ is matched to $R_i$. Note that $i=2$ precisely when the two pending subtrees of $R$ are not the same size, and $v_n$ is matched to the larger pending subtree.

For $j\in\{1,2\}$, let $X_j$ denote the set of leaves in $C_n$ that are matched to the pending subtree $R_j$, and $k_{1,j}<k_{2,j}\cdots<k_{|X_j|,j}$ be the sequence
for which $X_j=\{v_{k_{p,j}}: 1\le p\le |X_j|\}$. 

For a card $(C_{n-1},R',\sigma')$, we will write $R'=R'_1\oplus R'_2$, where we assume that $R'_1$ corresponds to $R_1$ and $R'_2$ corresponds to $R_2$. Note that the indexing $R'_1$ and $R'_2$ 
    may not be easily recognized in a card, so we need to pay attention to that issue throughout the proofs.

\begin{lemma}\label{lem:type2-cherrywhere}
    Let $\T=(C_n, R, \sigma)$ be a tanglegram of size $n \geq 6$ such that $R$ is type 2. Then $\Dm(\T)$ determines the value of $i$ and whether the leaves of the cherry of the left tree are matched to the same pending subtree of $R$ or not. 
\end{lemma}

\begin{proof}

    We first argue that we can determine from $\Dm(\T)$ whether $v_n$ and $v_{n-1}$ are matched to the same pending subtree of $R$ or not.  
    As $\size{R_1}\ge 2$,  in any card $\T'=(C_{n-1},R',\sigma')$, either $R'$ is of type 2, or $R_1=C_2$ and $R'=C_1\oplus R_2$. 
    
    If $v_n$ and $v_{n-1}$ are matched to the same pending subtree $R_i$ of $R$, then on at least $n-2 \geq 4$ cards, leaves $v'_{n-2}$ and $v'_{n-1}$ are matched to the same pending subtree of $R'$.
    If $v_n$ and $v_{n-1}$ are not matched to the same pending subtree of $R$, then on precisely one card, leaves $v'_{n-2}$ and $v'_{n-1}$ are matched to the same pending subtree of $R'$ (either $\T-v_n$ or $\T-v_{n-1}$). Thus, we can distinguish these two cases. 
    
    If $\size{R_1}=\size{R_2}$, then $i=1$, so we can determine $i$. 
    
    If $\size{R_1}\ne\size{R_2}$, then $R$ is not type 2a. Consider the cards in which $v'_{n-1},v'_{n-2}$ are matched to the same subtree of $R'$ (there is at least one such card), and take one in which this subtree has the largest size. 
    We have the following cases:
    \begin{enumerate}
    \item $R$ is type 2c or $R$ is type 2b and the two subtrees of $R'$ are of different sizes:
    
    We can identify $R'_1,R'_2$ correctly on this card.

    Choose $j$ such that $v'_n,v'_{n-1}$ are matched to the same subtree $R'_j$ of $R'$ in the card.
    If $v_n,v_{n-1}$ are matched to the same subtree of $R$, then $i=j$. Otherwise $i=3-j$.
    \item $R$ is type 2b and the two subtrees of $R'$ are of the same size:
    
    In this case the card was obtained by removing a leaf from $R_2$. 
         
     If $v_n,v_{n-1}$ are matched to the same pending subtree of $R$, then the maximality condition ensures this card contains all leaves of this pending subtree and $i=1$.
    
    If $v_n,v_{n-1}$ are matched to different subtrees of $R$, then this card is $\T-v_n$,  so $i=2$.
    \end{enumerate}
    
    Thus, we can determine $i$ from $\Dm(\T)$.

  
 \end{proof}

\begin{lemma}\label{lem:type2-cherrysame}
    Let $\T=(C_n, R, \sigma)$ be a tanglegram of size $n \geq 6$ such that $R$ is type 2. If the leaves of the cherry of the left tree are matched to the same pending subtree of $R$, then $\T$ is reconstructable from $\Dm(\T)$. 
\end{lemma}

\begin{proof}
As $n\ge 6$, we can reconstruct $C_n$ and $R$ from $\D(\T)$ (Lemma~\ref{lem:reduce}).
By Lemma ~\ref{lem:type2-cherrywhere}, $\Dm(\T)$ determines that $v_n,v_{n-1}$ are matched to the same subtree $R_i$ of $R$, and also the index $i$ of this subtree.
Therefore the class of tanglegrams of size at least $6$ with left tree caterpillar and right tree type 2 tree and the leaves of the cherry are matched to the same pending subtree of the right tree is a deck-decidable class.

In addition, in any card in which the leaves $v'_{n-1},v'_{n-2}$ of the left tree $C_{n-1}$ are matched to the same subtree of $R'$, we can determine the indices $R'_1$, $R'_2$ correctly, as we know the value of $i$ and $R'_i$ is the subtree of $R'$ that both $v'_{n-1}$ and $v'_{n-2}$ are matched to.

Choose $s,t$ such that the $s+1$ leaves in the sequence  $v_n, v_{n-1}, \ldots, v_{n-s}$ are all matched to the same pending subtree $R_i$ of $R$, 
 the $t-s$ leaves in the sequence $v_{n-s-1}, \ldots, v_{n-t}$ are matched to $R_{3-i}$, and either $t=n-1$ or $v_{n-t-1}$ is matched to $R_i$. We have
        $1\le s\le n-3$ and $s+1 \leq t \leq n-1$. 
        
        We start with showing that we can determine $s,t$ from $\Dm(\T)$.

        If $s=1$, then in exactly two cards  ($\T-v_n$ and $\T-v_{n-1}$), $v'_{n-1}$ and $v'_{n-2}$ are matched to different pending subtrees of $R'$.
        If $s\ge 2$, then in all cards, we have  $v'_{n-1}$ and $v'_{n-2}$  
        matched to the same pending subtree of $R'$. Moreover, when $s\ge 2$, in exactly $s+1$ cards (of the form $\T-v_j$ where $n-s\le j\le n$),
        $v'_{n-1}$ and $v'_{n-s-1}$ are not matched to the 
        same pending subtree of $R'$.  Thus, $\Dm(\T)$ determines the value of $s$. Moreover, if $s\ge 2$, we can identify $R'_1$ and $R'_2$ in every card, and
        if $s=1$, we can identify $R'_1$ and $R'_2$ in every card different from $\T-v_n$ and $\T-v_{n-1}$.
                
        If $s\ge 2$, then in all of the $s+1$ cards where $v'_{n-1}$ and $v'_{n-s-1}$ are matched to different subtrees of $R'$, every leaf in the sequence
        $v'_{n-s-1},\ldots,v'_{n-t}$ is matched to the same pending subtree of $R'$ while either $t=n-1$ or $v'_{n-t-1}$ is matched to a different pending 
        subtree, so we can determine $t$ from $\Dm(\T)$.

        If $s=1$ and $t=1$, then there is precisely one card ($\T-v_{n-2}$) in which $v'_{n-1},v'_{n-2}$, and $v'_{n-3}$ are all matched to the same pending subtree of $R'$. If $s=1$ and $t\ge 2$, then in none of the cards we have $v'_{n-1},v'_{n-2}$, and $v'_{n-3}$ all matched to the same pending subtree of $R'$.
        
       If $s=1$ and $t\ge 2$, there are precisely two cards $(\T-v_n$ and $\T-v_{n-1}$) such that $v'_{n-1}$ and $v'_{n-2}$ are not matched to the same pending subtree of $R'$. Consider these two cards only. If $t=n-1$, then $v'_1,v'_2,\ldots,v'_{n-3}$ are all matched to the same pending subtree of $R'$. If $t<n-1$, then  $v'_{n-3}$ and $v'_{n-t-1}$ are matched to different pending subtrees of $R'$, but for each $j$ with $3\le j\le t$, $v'_{n-3}$ and $v'_{n-j}$ are matched to the same pending subtree of $R'$.
        Therefore, when $s=1$, $\Dm(\T)$ determines the value of $t$. 
        
        Moreover, if $s=1$, then in the cards where $v'_{n-1}$ and $v'_{n-2}$ are not matched to the same subtree of $R'$ (which are $\T-v_n$ and $\T-v_{n-1}$)
        , leaf $v'_{n-3}$ is matched to $R'_i$ when $t=1$ and it is matched to $R'_{3-i}$ when $t\ge 2$. Therefore, even when $s=1$, we can identify the proper indices for $R'_1$ and $R'_2$ in all cards. Without loss of generality, when $s=1$ and $v'_{n-1},v'_{n-2}$ are matched to different subtrees, we choose the distance-to-root labeling of the left tree $C_{n-1}$ such that $v'_{n-1}$ is matched to $R_i$, so $v'_{n-2}=v'_{n-s-1}$ is matched to $R_{3-i}$.

        Now choose  a card in which $v'_{n-s-1}$ is matched to $R_{3-i}$. This means that a leaf in $R_i$ that was matched to one of the $s+1$ leaves $v_n,v_{n-1},\ldots,v_{n-s}$ was removed but all leaves from $R_{3-i}$ are present. This allows us to determine both $X_1$ and $X_2$, and, as $v_n,v_{n-1}\notin X_{3-i}$, this  card also determines the matching between $X_{3-i}$ and $R_{3-i}$.

        Next choose a card in which $v'_{n-s-1}$ 
        is matched to $R_i$, implying that a leaf from $R_{3-i}$ was removed to obtain $R'_{3-i}$ (as we know $R_1,R_2$ and $i$, and can identify $R'_{1},R'_2$ correctly, such a card can be found). Since we know the set $X_i$, and $v_n,v_{n-1}\in X_i$, this card allows us to determine the matching between $X_i$ and $R_i$, while for the two vertices matched to $v_n$ and $v_{n-1}$, we may not know which vertex is matched to $v_n$ and which to $v_{n-1}$. However, the two possible matchings give the same unlabeled tanglegram. Therefore, $\T$ is reconstructable from $\Dm(\T)$.
\end{proof}

\begin{lemma}\label{lem:cat-type2}
    Let $\T=(C_n, R, \sigma)$ be a tanglegram of size $n \geq 6$. If $R$ is type 2, $\T$ is reconstructable from $\Dm(\T)$.
\end{lemma}

\begin{proof}
As $n\ge 6$, we can reconstruct $C_n$ and $R$ from $\D(\T)$ by Lemma~\ref{lem:reduce}; and the class of tanglegrams of size at least $6$  where the left tree is a caterpillar and the right tree is type 2 is deck-decidable.
Lemma~\ref{lem:type2-cherrywhere} gives that $\Dm(\T)$ determines whether $v_n,v_{n-1}$ are matched to the same subtree of $R$, and also the index $i$.
If $v_n,v_{n-1}$ are matched to the same subtree of $R$, we are done by Lemma~\ref{lem:type2-cherrysame}.  

Assume that $v_n$ and $v_{n-1}$ are matched to different pending subtrees of $R$. 
       Recall that we choose a distance-to-root labeling such that if $v_n$ and $v_{n-1}$ are matched to different subtrees of $R$, then $v_{n-2},v_{n-1}$ are matched to
       the same subtree $R_{3-i}$.
       Let $t$ be such that the $t$ leaves in the sequence $v_{n-1}, v_{n-2}, \ldots, v_{n-t}$ are all matched to $R_{3-i}$ while $v_{n-t-1}$ is matched to $R_i$. We have $2 \leq t \leq n-2$. 

        We have exactly one card (namely $\T-v_n$) in which $v'_{n-1},v'_{n-2}$ are matched to the same pending subtree of $R'$. Consider this card:
        all leaves in the sequence $v'_{n-1},v'_{n-2},\ldots,v'_{n-t}$ are matched to $R'_{3-i}$ and $v'_{n-t-1}$ is matched to $R'_i$.
        In any other card,  $v'_{n-1}$ and $v'_{n-2}$ are matched to different pending subtrees. Therefore, we can determine $t$ from $\Dm(\T)$.

        Take the unique card in which $v'_{n-1},v'_{n-2}$ are matched to the same subtree $R'_{3-i}$. As this card is $\T-v_n$, $v_n\in X_i$, and $v_{n-1}\in X_{3-i}$,  we can determine $X_1$ and $X_2$, and the pending subtree $R_{3-i}$ of $R$ (uniquely, unless $R$ is type 2a).  
        
        Assume first that $t\ge 3$. Consider only the $n-1$ cards in which $v'_{n-1}$ and $v'_{n-2}$ are matched to different subtrees of $R'$, and without loss of generality, use the labeling in which $v'_{n-2}$ and $v'_{n-3}$ are matched to the same subtree of $R'$. As $t\ge 3$, this ensures that $v'_{n-1}$ corresponds to the vertex $v_n$ and $v'_{n-2},v'_{n-3}$ are matched to the pending subtree $R'_{3-i}$. This ensures that the pending subtrees $R'_1$, $R'_2$ are correctly identified in these cards. 
        
        Find a card in which the size of the subtree $v'_{n-2}$ is matched to is $\size{R_{3-i}}=|X_{3-i}|$. As $v_n\notin X_{3-i}$, and this card is of the form $\T-v_j$
        for some $v_j\in X_i-\{v_n\}$, this card allows us to determine the matching between vertices of $X_{3-i}$ and $R_{3-i}$.

        Now find a card in which the size of the subtree $v'_{n-1}$ is matched to is $\size{R_{i}}=|X_{i}|$. As $v_{n-1},v_{n-2}\notin X_{i}$, and this card is of the form $\T-v_j$
        for some $v_j\in X_{3-i}$, this card allows us to determine the matching between vertices of $X_{i}$ and $R_{i}$. 
        Thus, we can reconstruct $\T$ when $t\ge 3$.

        Assume in what remains that $t=2$. We have the following cases: 
  
 \begin{enumerate}
	\item $R$ is type 2c: 
	
	Since we know $R_1,R_2$, and $i$, and we can correctly identify $R'_1$ and $R'_2$ in every card, in every card different from $\T-v_n$, we can choose the distance-to-root labeling such that $v'_{n-1}$ is matched to $R'_i$ and $v'_{n-2}$ is matched to $R'_{3-i}$.  Consider the cards different from $\T-v_n$ and take one in which a leaf from $R_{3-i}$ was removed. As in this card, $v'_{n-1}$ is matched to $R_i$ and  $v'_{n-2}$ is matched to $R'_{3-i}$, this gives us the matching between $X_i$ and $R_i$.
	Take another card that is not $\T-v_n$, in which a leaf from $R_i$ was removed --- this gives us the matching between $X_{3-i}$ and $R_{3-i}$. This determines $\T$.

        \item $R$ is type 2b:
        
        We know $i$, $R_1,R_2$, and we can identify $R_1'$, $R_2'$ correctly in every card in which $R'$ is not type 2a,
        Moreover, $\size{R_1}\ge 3$, $\size{R_2}=\size{R_1}+1\ge 4$, and if $R'$ is type 2a, then a leaf from $R_2$ was removed to obtain $R'$. 
        
        Take a card from which a leaf from $R_1$ was removed (so $R'_1$ and $R'_2$ can be identified) and $v'_{n-1},v'_{n-2}$ are matched to different pending subtrees of $R'$.
        This card determines the matching between $X_2$ and $R_2$.

        Now consider $Z=X_2-\{v_1,v_n,v_{n-2},v_{n-1}\}$. As $|X_2|=\size{R_2}\ge 4$ and only one of $v_n,v_{n-1}$ is in $X_2$, $Z\ne\emptyset$. For any $v_j\in Z$, 
        card $\T-v_j$ has $v'_{n-1}$ and $v'_{n-2}$ matched to different pending subtrees of $R'$. Without loss of generality, choose the labeling such that
        $v'_{n-2}$ and $v'_{n-3}$ are matched to the same pending subtree of $R'$.
        Then $v'_{n-1}$ and $v'_1$ are matched to the same pending subtree of $R'$ precisely when $v_1\in X_i$.

        Consider any card where a leaf from $R_2$ has been removed, $v'_{n-1},v'_{n-2}$ are matched to different pending subtrees of $R'$, and $v'_1$ is matched to the opposite subtree as $v'_{n-3}$ exactly if $v_1 \in X_i$. We may choose the labeling so that $v'_{n-2}$ and $v'_{n-3}$ are matched to the same pending subtree and so $v'_1$ and $v'_{n-1}$ are matched to the same subtree exactly when $v_1 \in X_i$. 
        This card is of the form $\T-v_p$ for some $p\ne n$. 
        
        We will first show that if
        $v_p\notin Z$, then $p=1$ and $v_1,v_2$ are matched to the same subtree of $R$.
        
        If $v_p\notin Z$, then $p\in\{n-1,n-2,1\}$. 
        
        Assume $p\in\{n-1,n-2\}$. Since $t=2$, we have that $v'_{n-3}$ corresponds to $v_{n-3}$, $v'_{n-1}$ corresponds to one
        of $v_{n-1},v_{n-2}$ (the other is $v_p$), $v'_{n-2}$ corresponds to $v_{n}$, and $v'_1$ corresponds to $v_1$. However, this contradicts the fact that $v'_{n-1}$ and $v'_1$ match to the same subtree of $R'$ precisely when $v_1\in X_i$. 
        Therefore, if $v_p\notin Z$, then $p=1$.

        Assume $p=1$. The card we choose is $
        \T-v_1$, so for
        each $j\in \{1, \ldots, n-1\}$, $v'_{n-j}$ corresponds to $v_{n+1-j}$, and $v_n,v_1$ are matched to the same subtree of $R$ precisely when $v_n,v_2$ are matched to the same subtree of $R$. Therefore, $v_1,v_2$ are matched to the same subtree of $R$.

        Thus, whether or not $p \in Z$, this card allows us to determine the matching between $R_1$ and $X_1$. Thus, $\T$ is reconstructable from $\Dm(\T)$.

        \item $R$ is type 2a:   
        
        In this case $i=1$, $n$ is even, $|X_1|=|X_2|=\size{R_1}=\size{R_2}\ge 3$,
        $v_n,v_{n-3}\in X_1$ and $v_{n-1},v_{n-2}\in X_2$.
        Note that for $j\in\{1,2\}$, a matching between $X_j$ and $R_j$ is an ordering $w_1,\ldots,w_{\frac{n}{2}}$ of the leaves of $R_j$ such that for each $q\in\left \lbrace 1, \ldots,\frac{n}{2}\right \rbrace$, we have that $v_{k_{q,j}}$ is matched to $w_q$.

        Consider the $n-1$ cards in which $v'_{n-1},v'_{n-2}$ are matched to different subtrees of $R'$ (so $v_n$ was not removed). Each of these give an ordering of the leaves of the larger pending subtree of $R'$, so one of the two possible matchings from $X_1$ to $R_1$ or $X_2$ to $R_2$.

        If all $n-1$ of these orderings are the same (up to isomorphism of the subtree), $\T$ is determined.

        Otherwise, one ordering appears $\frac{n}{2}-1$ times, and the other $\frac{n}{2}$ times. As $i=1$, $v_n\in X_1$, and $\T-v_n$ is not among the $n-1$ cards considered, the ordering that appears fewer times gives the matching between $X_2$ and $R_2$. $\T$ is determined.
        \end{enumerate}
\end{proof}

\section{Conclusion}\label{sec:conclusion}

Symmetry arguments and Lemmas~\ref{thm:cat-cat}, \ref{lem:cat-type1}, and \ref{lem:cat-type2} establish our main theorem, which we re-state for the reader's convenience.

\begin{reptheorem}{mainthm1}
Let $\mathcal{T}=(L,R,\sigma)$ be a tanglegram of size $n\geq 6$. If $L$ or $R$ is a caterpillar, then $T$ is reconstructable from the multideck $\Dm(\T)$.
\end{reptheorem}

We note that Theorem~\ref{mainthm1} establishes the reconstruction of caterpillar tanglegrams of size $n \geq 6$ from the \emph{multi}deck $\Dm(\T)$. Indeed, our proofs rely on the multiplicities of cards in several instances. However, since trees of size $n \geq 6$ are reconstructable from their decks (Lemma~\ref{lem:reduce}), it is an interesting open question to investigate whether decks also determine sufficiently large tanglegrams of some tanglegram class. 

The original question --- whether tanglegrams are reconstructable for sufficiently large size --- remains open. Interestingly, not all tanglegrams of size $6$ are reconstructable. An example of a pair of tanglegrams of size $6$ that have the same multideck is shown in Figure~\ref{fig:example_6}.  Using computer search \cite{github}, we found $4$ pairs of size-$6$ tanglegrams that have the same multideck. In each of these tanglegrams, the left and right trees were a $C_3\oplus C_3$, and either $C_2\oplus (C_2\oplus C_2)$ or $(C_1\oplus)^2(C_2\oplus C_2)$.

\begin{figure}
\centering
\scalebox{0.7}{
\begin{tikzpicture}
\pic at (15,0){example_6_1}; 
\end{tikzpicture} \qquad \qquad
\begin{tikzpicture}
\pic at (25,0){example_6_2}; 
\end{tikzpicture}
}
\\
\vspace{5mm}
\scalebox{0.6}{
\begin{tikzpicture}
\pic at (-6,0) {example_6_card_1}; 
\end{tikzpicture} \quad
\begin{tikzpicture}
\pic at (6,0) {example_6_card_2}; 
\end{tikzpicture} \quad 
\begin{tikzpicture}
\pic at (18,0) {example_6_card_3}; 
\end{tikzpicture}} 
\\
\vspace{5mm}
\scalebox{0.6}{
\begin{tikzpicture}
\pic at (0,0) {example_6_card_4}; 
\end{tikzpicture} \quad 
\begin{tikzpicture}
\pic at (12,0) {example_6_card_5}; 
\end{tikzpicture}
}
\caption{Two non-isomorphic size $6$ tanglegrams that have the same multideck, and the cards in their deck. A labeling of the left and right trees of size $6$ tanglegrams are given as well as all representations of the cards in the deck.}
\label{fig:example_6}
\end{figure}

In general, we ask the following: 
\begin{problem} 
 Given a tanglegram class $\mathcal{H}$, determine whether sufficiently large tanglegrams in the class are reconstructable from their deck or multideck. If yes, what is the best threshold for ``sufficiently large"?
\end{problem}

The requirement of reconstructibility here includes being able to identify membership in $\mathcal{H}$ from the deck or multideck (see Definition~\ref{tangleclassreconstructable}). Theorem~\ref{mainthm1} is of this type: 
$\mathcal{H}$ is the set of tanglegrams where at least one of the left and right tree is a caterpillar. In this case (and in any case when membership in $\mathcal{H}$ is determined by identifying left and right trees) being able to identify the membership relation from the multideck or the deck is not a significant requirement (see Lemma~\ref{lem:reduce}). 

A class of tanglegrams where left and right trees alone do not determine membership in the class is the class of planar tanglegrams, that is, tanglegrams that have a layout without any crossing edges. Czabarka, Wagner and Sz\'ekely~\cite{Czabarka2018} showed a tanglegram analogy of Kuratowski's theorem: every nonplanar tanglegram contains one of two size $4$ subtanglegrams. This result implies that given a deck of any tanglegram with size different from $4$, one can identify from its deck whether it is planar or not.
Interestingly, none of the tanglegrams
among the size $6$ tanglegrams that are not reconstructable from their multideck are planar, therefore it is possible that all size at least $6$ planar tanglegrams are reconstructable from their multideck. 
The authors are preparing a manuscript on the reconstructibility of planar tanglegrams.

\section*{Acknowledgements}
This project was initiated when the authors participated in the  Mathematics Research Community workshop ``Trees in Many Contexts" (Beaver Hollow, NY), organized by the American Mathematical Society with the  support of the National Science Foundation under Grant Number DMS-1641020.  The work on this project was continued when EC, LS, and KW were in residence at the Institute for Computational and Experimental Research in Mathematics of Brown University  (Providence, RI), during the ``Theory, Methods, and Applications of Quantitative Phylogenomics" semester program supported by the National Science Foundation under Grant Number DMS-1929284. We thank two anonymous reviewers for their valuable feedback on an earlier version of this manuscript.

\bibliographystyle{plain}
\bibliography{bibliography}

\end{document}